\documentclass[letterpaper,11pt]{article}

\marginparwidth 0pt
\oddsidemargin  0pt
\evensidemargin  0pt
\marginparsep 0pt
\topmargin   -.5in
\hoffset -0.1in
\textwidth   6.6in
\textheight  8.5 in

\usepackage{endnotes}
\let\footnote=\endnote

\usepackage{epsfig}
\usepackage{calc}
\usepackage{amstext}
\usepackage{amsmath}
\usepackage{multicol}
\usepackage{amsfonts}
\usepackage{amssymb}
\usepackage{mathrsfs}
\usepackage{bm}
\usepackage{xcolor}

\newcommand{\AR}[2]{\left[\begin{array}{#1}#2\end{array}\right]}
\newcommand{\trace}{\mathrm{trace}}

\newcommand{\rank}{\mathrm{rank}}
\newcommand{\T}{\mathrm{T}}
\newcommand{\SDP}{\mathrm{SDP}}
\newcommand{\OPT}{\mathrm{OPT}}

\newtheorem{theorem}{Theorem}

\newtheorem{proposition}[theorem]{Proposition}

\newtheorem{proof}{Proof}
\setcounter{page}{1}
\usepackage{epsfig}
\usepackage{amstext}
\usepackage{amsfonts}
\usepackage{amssymb}
\usepackage{mathrsfs}
\usepackage{bm}
\usepackage{cleveref}

\title{
	Robust affine control of linear stochastic systems}
\author{
	Georgios Kotsalis, Guanghui Lan
	\thanks{H. Milton Stewart School of Industrial \& Systems Engineering, Georgia Institute of Technology, Atlanta, GA 30332.
		(email: {\tt gkotsalis3@gatech.edu,   \tt george.lan@isye.gatech.edu}).}
}

\begin{document}

\maketitle

\begin{abstract}
In this work we provide a computationally tractable procedure for designing affine control policies, applied to  constrained, discrete-time, partially observable, linear systems subject to set bounded disturbances, stochastic noise and potentially Markovian switching over a finite horizon. 

We investigate the situation  when performance specifications are expressed 
via averaged quadratic inequalities on the random state-control trajectory. Our methodology also applies  to steering the density of the state-control trajectory under set bounded uncertainty, a problem that falls in the realm of robust covariance control.

Our developments are based on expanding the notion of affine policies that are functions of  
the so-called ``purified outputs'',  to the class of Markov jump linear systems. This re-parametrization of the set of policies, induces a bi-affine structure in the state and control variables that can further be  exploited via robust optimization techniques, with the approximate inhomogeneous $S$-lemma being the cornerstone. Tractability is understood in the sense that for each type of performance specification considered, an explicit convex program for selecting the parameters specifying the control policy is provided. Furthermore, our convex relaxation is tight within a factor associated to the $S$-lemma. 

Our contributions to the 
existing literature on the subject of robust constrained control lies in the fact that 
we are addressing a wider class of systems than the ones already studied, by including Markovian switching, and the consideration of quadratic inequalities rather than just linear ones. Our work expands on the previous investigations on finite horizon covariance control by addressing the robustness issue and the possibility that the full state may not be available,  therefore enabling the steering of the state-control trajectory density in the presence of disturbances under partial observation.

 \vspace{.1in}
 
 \noindent {\bf Keywords:} robust optimization, convex programming, 
 multistage minimax
 
 \vspace{.07in}
 
 \noindent {\bf AMS 2000 subject classification:} 90C47,90C22, 49K30,	49M29
 
\end{abstract}

\vspace{0.1cm}

\setcounter{equation}{0}

\section{Introduction}

We  consider a discrete time partially observable linear system over a finite horizon, 
affected by set bounded disturbances, stochastic noise and possibly Markovian switching. 
Performance specifications are expressed via averaged quadratic inequalities on the random state-control trajectory, 
quadratic inequalities on its mean and semidefinite constraints on its covariance matrix.
The task of the decision maker is to design a non-anticipative control policy so that the constraints are fulfilled 
for all realizations of the  set bounded uncertainty once the random effects of the noise and switching have been averaged out.  Our contribution lies in providing a computationally tractable procedure for designing affine policies that guarantee the robust satisfaction of the constraints.

While the particular setup is novel this problem falls in the category of  robust constrained control and multistage decision making under uncertainty pertinent to the fields of control theory and dynamic optimization respectively. 
Problems of this nature are ubiquitous in science, engineering, and 
finance, leading to the development of a long standing list of solution methodologies, including 
dynamic programming (DP) and the associated variational-formulation-based techniques
\cite{fleming}, \cite{bertsekas1},
approximate dynamic programming \cite{sutton}, \cite{bertsekas2},
stochastic programing \cite{birge}, \cite{shapiro},
set-theoretic control methods \cite{blanchini}, \cite{kurzhanski}, model predictive control \cite{rawlings}, \cite{maj},
$l_1$ methods \cite{dahleh}, \cite{elia} and robust control 
\cite{zhou}, \cite{basar}, \cite{dullerud}, as well as, 
robust optimization \cite{bental1}.

The most general system class considered in this paper is that of Markov jump linear systems. They allow
one to capture the changes that the underlying parameters of a system may undergo as it switches among different 
operating conditions. 
Fundamental contributions on the stochastic jump linear continuous time control problem 
can be found in \cite{krasovskii}, \cite{sworder} and \cite{wohnham}.
Subsequently, various analysis and synthesis results applicable
to linear time-invariant systems have been extended to
 Markov jump linear systems. A comprehensive review of
this material can be found in \cite{costa}, \cite{morozan} and the references therein.
Given their modeling flexibility Markov jump linear systems have been employed extensively in applications ranging from robotics to communications and networked control \cite{seiler}, \cite{hespanha}, 
and apart from engineering in fields such as operations research and econometrics where we refer indicatively to \cite{kwon}, \cite{tu} and \cite{hamilton} respectively. 
In the latter two fields the more common term is regime-switchinig models. 

The uncertainty sets we consider are expressed as as the intersection of
finitely many ellipsoids and elliptic cylinders. It is a subclass of 
ellitopes the latter ones introduced in the context of nonparametric statistics in \cite{jud_17}.

The constraints that we impose on the mean and covariance matrix of the state-control trajectory naturally connect to the problem of steering  
its density  in the presence of set bounded uncertainty. 
This particular problem that we study falls in the category of robust covariance control under partial observation. 
The covariance steering problem over a finite horizon under full state feedback in the absence of set bounded disturbances has been investigated in \cite{chen_1}, \cite{chen_2}, \cite{chen_3},
\cite{okamoto}, \cite{bakolas}.  In an infinite horizon setting  the literature goes back to the original papers on the steady state state-covariance 
assignment problem \cite{hotz1}, \cite{hotz2}. Subsequent work on the infinite horizon covariance assignment problem includes \cite{xu}, \cite{yasuda}, \cite{grigoriadis},
see also \cite{skelton} and the references therein. 

Our work falls also into the realm of affinely adjustable robust counterparts (AARC) \cite{bental3}, \cite{bental1},
for robust optimization problems, where optimization variables correspond to decisions that are made sequentially
as the actual values of the uncertain data become  available. The literature on the topic of robust optimization in static and  multistage settings is extensive; see, for example, \cite{bental4}, \cite{bental5}, \cite{bertsimas1}, \cite{sim1} and \cite{brown1}.

We consider  \cite{bental2}  as the closest work to the methods described in this paper. 
In \cite{bental2} the authors investigated
the robust constrained multistage decision problem for  deterministic linear dynamic models 
subject to linear constraints on the state-control trajectory. 
In our case the stochasticity induced by the Markovian-switching dynamics and the external noise dictates the use of quadratic constraints, instead of linear ones, to meaningfully confine not only the mean of the state-control trajectory,
but also keep its covariance matrix within a desired range. The price one pays for considering the wider class of constraints is that the resulting convex formulation for the affine policy design is a semidefinite program, as opposed to the corresponding linear programming formulation encountered in \cite{bental2}. 

We restrict ourselves to affine policies and in particular a specific parameterization thereof, affine policies based on ``purified outputs'' for the Markovian-switching models under consideration; see also \cite{bental2} for the corresponding concept in the deterministic case.  The purified output parametrization can be interpreted as a finite horizon analog of the Youla parametrization \cite{youla1}, \cite{youla2} and the related $Q$-design procedure see for instance \cite{barratt}, \cite{vid}.
The main advantage of this restriction in affine purified output-based policies 
lies in the resulting efficient computation of the policy parameters by an explicit convex optimization problem. This appealing feature comes at the cost of potential conservatism.

In the scalar case, under the assumption of deterministic dynamics, the optimality of affine policies for a class of robust multistage
optimization problems subject to constraints on the control variables was proven in 
\cite{bertsimas2}.  The degree of suboptimality of affine policies was investigated computationally in \cite{bertsimas3} and \cite{kuhn}. The former reference employs 
the sum of squares relaxation, while the latter efficiently computes the suboptimality gap by invoking linear decision rules in the dual formulation as well. 
Instances  where affine policies are provably 
optimal also include the linear quadratic Gaussian problem (LQG) \cite{kwakernaak}, \cite{andreson} 
as well as its risk sensitive version with an exponential cost criterion (LEQG) \cite{whittle2}, both problems admitting
a dynamic programming solution.  

Further applications of affine policies can found in stochastic programming settings \cite{sun}, \cite{nemirovski}
and in the area of robust model predictive and receding horizon control \cite{loefberg}, \cite{goulart1}, \cite{goulart2},
\cite{skaf}.  From a historical perspective  some of the earliest accounts on the use of affine policies in stochastic programming 
include \cite{charnes}, \cite{gartska}. The constrained control problem in he presence of disturbances has a long history as well. 
Fundamental contributions include the work on minimax controller design \cite{salm}, \cite{wits} and the minimax reachability 
problem \cite{glover}, \cite{ber2}, \cite{ber3}, \cite{ber4}. 

In retrospect the computational tractability of our approach is owed to the particular choice of uncertainty set in conjuction to the restriction to affine policies in purified outputs 
and the subsequent use of the approximate inhomogeneous $S$-lemma. The $S$-lemma plays a prominent role both in robust optimization and control theory, where it is typically referred to as $S$-procedure \cite{yakubovich1}. The extension to infinite dimensional settings \cite{meg}, \cite{yakubovich2} led to many applications in robust control problems, see for instance \cite{fer}, \cite{savkin} and the references therein.

The paper is organized as follows.  Section 2 contains some common notation that will be used in the subsequent discussion. 
The formal problem statement is given in
Section 3 which concludes with a statement of our contributions. The main theorems of the paper describing our tractable approach are in Section 4. We provide a numerical 
illustration of our methodology in Section 5, and we conclude with a summary and thoughts for future directions in Section 6.

\section{Notation}
The set of positive integers is denoted by $ \mathbb{Z}_{+} $, the set of non-negative integers by
$\mathbb{N} $,
the set of real numbers by $ \mathbb{R}$. For $m,n \in \mathbb{N}$, $m< n $,
$ \mathbb{Z}_{[m,n]} = \{m,m+1, \hdots, n \}$.
For $n,m \in \mathbb{Z}_{+}$ let
$ \mathbb{R}^n$ denote the Euclidean $n$-space and  $ \mathbb{R}^{n \times m}$ 
the set of $n$-by-$m$ real matrices.  The set of symmetric matrices in 
$\mathbb{R}^{n \times n}$ is $\mathbb{S}^n$, the positive semi-definite, (definite) cone is $ \mathbb{S}^n_+, (\mathbb{S}_{++}^n)$ and the relation
$A \succeq ( \succ) B$ stands for $ A - B \in   \mathbb{S}^n_+, ( \mathbb{S}^n_{++})$.
The transpose of the column vector $ x \in \mathbb{R}^n$ is denoted by $ x^\T$ and for $i \in \mathbb{Z}_{+}$, $ [x]_i$ refers to the $i$-th entry of $ x$. Similarly for $ i, j \in \mathbb{Z}_{+}$,
$[ A]_{i,j}$ refers to the $(i,j)$-th entry of the matrix $ A \in \mathbb{R}^{n \times m}$. 
For $ x, y  \in \mathbb{R}^n$, $ \langle x, y \rangle$ denotes the inner product of the two vectors. 
A sum (product) of $n  \times n$ matrices over an empty set of indices is the 
$n \times n$ zero (identity) matrix.

Random vectors are denoted in boldface and are defined on an abstract probability space $( \Omega, \mathcal{F}, \mathbb{P})$,  $ \mathbb{E}[\cdot]$ denotes the expectation operator. 
Given $t  \in \mathbb{Z}_+$ and a  discrete time random process $\mathbf{x}_0, \mathbf{x}_1, \hdots$, 
the notation $\mathbf{x}_0^t$ stands for the history of the process from time $0$ to time $t$, i.e. $ \mathbf{x}_0^t = \{\mathbf{x}_0, \mathbf{x}_1, \hdots, \mathbf{x}_t \}$. A realization, sample path,  of $\mathbf{x}_0^t$ is denoted by $x_0^t = \{x_0, x_1, \hdots, x_t \}$.

\section{Problem Statement}

\subsection{System}

Fix some positive integers  $n_x, n_u, n_{d},  n_{e}, n_y,  N, m $.
We will consider a partially observable, 
discrete-time, Markov jump linear system on a finite time horizon $N$, referred to as $\mathcal{S}$, with state space description 
 \begin{eqnarray}
\label{system}
\nonumber 
\mathbf{x}_0 & = & z ~ + ~ \mathbf{s}_0, \\
\nonumber 
\mathbf{x}_{t+1} & = &  A_t[\bm{\theta}_t]  ~ \mathbf{x}_t ~ + ~ B_t[\bm{\theta}_t] ~  \mathbf{u}_t ~ + ~ \bm{\sigma}_t \\
\mathbf{y}_t  & = & C_t[\bm{\theta}_t ] ~  \mathbf{x}_t ~ + ~ 
\bm{\rho}_t,~~~ t \in \mathbb{Z}_{[0,N-1]}.
\end{eqnarray}
The switching signal $ \bm{\theta}_{t}$ is modeled as a Markov chain, 
taking values in $ \mathbb{O} = \{1, 2, \hdots, m\}$ with initial distribution 
$\pi$ and probability transition matrix $P$. The state of the system is hybrid
composed of $ ( \mathbf{x}_t, \bm{\theta}_t) \in \mathbb{R}^{n_x} \times \mathbb{O}$ and the control vector is $ \mathbf{u}_t \in \mathbb{R}^{n_u}$.
We assume that the underlying switching process is observable and as such the
output of the system is hybrid, composed of  
$ ( \mathbf{y}_t, \bm{\theta}_t) \in \mathbb{R}^{n_y} \times \mathbb{O}$.
For simplicity, we will 
incur an abuse of notation 
by referring solely to $ \mathbf{x}_t$
and $ \mathbf{y}_t$ as the state and output of the system respectively and to 
$ \bm{\theta}_t$ as the switching signal. 
The initial state $\mathbf{x}_0$ exhibits a deterministic and a stochastic component.
In particular for a given  $\Sigma_0 \in \mathbb{S}^{n_x}_{++}$,
$$
\mathbf{x}_0 = z + \mathbf{s}_0, ~~~ \text{where}~~ \mathbf{s}_0 \sim  \mathcal{N}(0, \Sigma_0).
$$  
The disturbances $ \bm{\sigma}_t, \bm{\rho}_t$ are induced by both set bounded uncertainty and stochastic noise,
\begin{eqnarray}
\label{disturbance model}
\nonumber
\bm{\sigma}_t & = & B_t^{(d)}[\bm{\theta}_t]  ~   d_t ~+~  
B_t^{(s)}[\bm{\theta}_t ]  ~ \mathbf{e}_t, \\
\bm{\rho}_t & = & D_t^{(d)}[\bm{\theta}_t ] ~  d_t ~  + ~ D_t^{(s)}[\bm{\theta}_t ]   ~  \mathbf{e}_t, ~~~t \in \mathbb{Z}_{[0,N-1]}.
\end{eqnarray}
The set bounded disturbance is $d_t \in 
\mathbb{R}^{n_{d}}$ and the stochastic disturbance is $ \mathbf{e}_t \in
\mathbb{R}^{n_{e}}$ respectively. 
It is assumed that $\mathbf{s}_0, \mathbf{e}_0^{N-1}$ are jointly independent
and for each $t \in \mathbb{Z}_{[0,N-1]} $, 
$$
\mathbf{e}_t \sim \mathcal{N}(0,I).
$$
We refer to the random variables  $ \bm{\theta}_0^{N-1}, \mathbf{s}_0,  
\mathbf{e}_0^{N-1}$ as the basic random variables of the system description,
since all other random variables are constructed from them. 
Furthermore $ \bm{\theta}_0^{N-1}$ is independent of $\mathbf{s}_0,  
\mathbf{e}_0^{N-1} $.  The switching signal  controls the selection 
of the system matrices 
$$A_t[\bm{\theta}_t], B_t[\bm{\theta}_t], 
B_t^{(d)}[\bm{\theta}_t], B_t^{(s)}[\bm{\theta}_t], 
C_t[\bm{\theta}_t], D_t^{(d)}[\bm{\theta}_t],  D_t^{(s)}[\bm{\theta}_t],$$ 
which are assumed to be of conformable dimensions and depend in a manner known in advance on its current realization $\theta_t$.
We will  write 
$$
\bm{\epsilon}^{\T} =\AR{ccccc}{\mathbf{s}_0^\T, &  \mathbf{e}_0^\T, & \mathbf{e}_1^\T,& \hdots, & \mathbf{e}_{N-1}^\T} 
\in  \mathbb{R}^{n_{\epsilon}},
$$
where
$n_{\epsilon} =  n_x + N  n_e$.
The signals $z, d_0^{N-1}$ introduce set bounded uncertainty into the system $\mathcal{S}$. The uncertainty vector is $\zeta$ where
$$
\zeta^\T= \AR{ccccc}{z^\T, &  d_0^\T, & d_1^\T,& \hdots, & d_{N-1}^\T} 
\in  \mathscr{D}^N \subset \mathbb{R}^{n_{\zeta}}, 
$$
$n_{\zeta} =  n_x + N  n_d$, and $\mathscr{D}^N $ is the uncertainty set.

\subsection{Uncertainty set description}

Our choice for $\mathscr{D}^N $ is dictated by the need of upper-bounding 
the maximum of a quadratic form over the uncertainty set. We consider 
an intersection of ellipsoids parametrized by positive semidefinite matrices 
$$ 
\{Q_i \in \mathbb{S}_+^{n_{\zeta}}  \}_{i \in  \mathbb{Z}_{[1,s]}},~~ 
\text{such that}~~ \sum_{i=1}^s Q_i \succ 0.
$$
The uncertainty set 
$\mathscr{D}^N $
is expressed as 
\begin{eqnarray}
\label{ellitop}
\mathscr{D}^N  = \{ \zeta \in \mathbb{R}^{n_{\zeta}} ~|~   \langle Q_i \zeta, \zeta \rangle \leq 1, ~ 1 \leq k \leq s \}.
\end{eqnarray}
The parameters $  \{  Q_i,  \}_{i \in \mathbb{Z}_{[1,s]}} $ describing  $\mathscr{D}^N$
are assumed to be part of the problem specification. 
The particular choice of uncertainty description is related to the subsequent use of the approximate inhomogeneous $S$-lemma for the purpose of arriving to a computationally tractable control design methodology.

\subsection{Affine control laws in purified outputs}

We will employ causal, deterministic control policies that are affine in the purified outputs of the system.  In order to introduce  the concept of 
purified-output-based (POB) control laws suppose first that the system $\mathcal{S}$ is controlled by a causal (non-anticipating) policy
$$
\mathbf{u}_t = \phi_t( \mathbf{y}_0, \hdots, \mathbf{y}_t), ~~~t  \in \mathbb{Z}_{[0,N-1]},
$$
where for time instant, $\mathbf{u}_t$ is adapted to the filtration
associated to the underlying probability space. 
We consider now the evolution of a disturbance free system starting from rest, with 
state space description
\begin{eqnarray}
	\label{model}
	\nonumber
	\hat{\mathbf{x}}_0 & = & 0, \\
	\nonumber
	\hat{\mathbf{x}}_{t+1} & = &  A_t[\bm{\theta}_t] ~ \hat{\mathbf{x}}_t ~ + ~ B_t[\bm{\theta}_t] ~ \mathbf{u}_t, \\
	\hat{\mathbf{y}}_t  & = & C_t[\bm{\theta}_t] ~ \hat{\mathbf{x}}_t,  ~  t \in  \mathbb{Z}_{[0,N-1]},
\end{eqnarray}
where $ \hat{\mathbf{x}}_t \in \mathbb{R}^{n_\mathbf{x}}$, $\hat{\mathbf{y}}_t \in \mathbb{R}^{n_y}$.
The dynamics in \eqref{model} can be run in ``on-line'' fashion given that 
the decision maker observes not only the realizations of the output and the switching signals,  
but also knows the already computed control values.  
The purified outputs are the difference of the outputs of the actual system \eqref{system} and the outputs of the auxiliary 
system \eqref{model}, i.e. 
\begin{equation}
	\label{pure_out}
	\mathbf{v}_t = \mathbf{y}_t - \hat{\mathbf{y}}_t , ~  t \in  \mathbb{Z}_{[0,N-1]}.
\end{equation}
At time instant $t \in \mathbb{Z}_{[0,N-1]}$ the realization of the purified output 
$v_0^t$ is available when the decision on selecting the control value $u_t$ is to be made. 
The term ``purified'' stems from the fact that the outputs \eqref{pure_out} 
do not depend on the particular choice of control policy.  This circumstance is verified by introducing the signal 
$$
\bm{\delta}_t = \mathbf{\mathbf{x}}_t - \hat{\mathbf{x}}_t,~~~t \in  \mathbb{Z}_{[0,N-1]},
$$
and observing that the purified outputs can also be obtained 
by combining \eqref{system}, \eqref{model}, \eqref{pure_out} and the definition of $\bm{\delta}_t$ as 
the outputs of the system with state space description
\begin{eqnarray}
\label{model_pure}
\nonumber
\bm{\delta}_0 & = & \mathbf{x}_0,\\
\nonumber
\bm{\delta}_{t+1} & = & A_t[\bm{\theta}_t] ~ \bm{\delta}_t ~ + ~ B_t^{(d)}[\bm{\theta}_t] ~ d_t ~ + ~ B_t^{(s)}[\bm{\theta}_t] ~ \mathbf{e}_t,   \\
\mathbf{v}_t  & = & C_t[\bm{\theta}_t] ~ \bm{\delta}_t  ~ + ~ D_t^{(d)}[\bm{\theta}_t] ~ d_t ~ + ~ D_t^{(s)}[\bm{\theta}_t] ~ \mathbf{e}_t,  
\end{eqnarray}
$t=0,1, \hdots, N-1$. For $T \in \mathbb{Z}_+$, let 
\begin{eqnarray*}
\bm{\theta}_{[t,T]} & = &
\AR{c}{\theta_{\max{\{ 0,t-T \}}} \\ \theta_{\max{\{ 0,t-T+1 \}}} \\  \vdots  	\\ \theta_t}  \in 
\mathbb{O}^{\min{\{T+1,t+1\}}}, \\
\bm{\theta} &=&  \bm{\theta}_{[N-1,N-1]} \in \mathbb{O}^N.
\end{eqnarray*}
An affine POB policy is of the form
\begin{equation}
\label{control_pure_out}
\mathbf{u}_t = h_{t}[\bm{\theta}_{[t,T]}] + \sum_{i=0}^t
H_i^t[\bm{\theta}_{[t,T]}]~ \mathbf{v}_i,~~
~t \in  \mathbb{Z}_{[0,N-1]},
\end{equation}
where
\begin{equation}
\label{parameters_control}
h_{t} :  \mathbb{O}^{\min{\{T+1,t+1\}}}  \rightarrow \mathbb{R}^{n_u},~ H_j^t :    \mathbb{O}^{\min{\{T+1,t+1\}}} \rightarrow \mathbb{R}^{n_u \times n_x},
\end{equation}
$0 \leq j \leq t, ~~ t \in \mathbb{Z}_{[0,N-1]}$. 

In the above description $T$ is the length of the ``switching memory''  utilized by the affine policy under consideration. 
The parameters of the POB affine policy are the collection of maps
$$ \{ h_{t}, H_j^t \}_{0 \leq j \leq t, ~ t \in \mathbb{Z}_{[0,N-1]}} \in \mathcal{X},
$$ 
where
$ \mathcal{X}$ denotes the set of all possible POB affine control laws 
applicable to the system $\mathcal{S}$.

Inspecting the affine policy \eqref{control_pure_out} we observe that at a given time instant $t$ the number of decision variables 
associated with the term $h_{t}[\bm{\theta}_{[t,T]}]$ 
is 
$$
\begin{cases}
m^{T+1}   n_u ~~~ \text{if}~~~ t \geq T \\
m^{t+1}    n_u ~~~ \text{if}~~~ t <  T.
\end{cases}
$$
At the same time instant $t$ the number of decision variables 
associated with the terms 
$$ H_0^t[\bm{\theta}_{[t,T]}], H_1^t[\bm{\theta}_{[t,T]}], 
\hdots, H_t^t[\bm{\theta}_{[t,T]}]$$
is 
$$
\begin{cases}
(t+1)   m^{T+1}  n_u  n_y ~~~ \text{if}~~~ t \geq T \\
(t+1)    m^{t+1}     n_u  n_y~~~ \text{if}~~~ t <  T.
\end{cases}
$$
Thus the  number of decision variables associated with the $t$-th level is
$
m^{T+1}    n_u  ( (t+1)   n_y + 1)
$
and the total number of decision variables associated to the policy parameters is 
\begin{eqnarray}
\nonumber
\label{dimension}
\dim(N,T) & = & \sum_{i=0}^T m^{i+1} n_u ( (i+1)   n_y + 1) \\~ &+& ~  \sum_{i=T+1}^{N-1} m^{T+1}   n_u 
( (i+1)    n_y + 1).
\end{eqnarray}
In the following we will identify the policy parameters $ \{ h_{t}, H_j^t \}_{0 \leq j \leq t, t \in \mathbb{Z}_{[0,N-1]}} $ with an element 
$ \chi \in \mathbb{R}^{\dim(N,T)}$ that contains their values for all possible realizations of the switching signal $ \bm{\theta} \in 
\mathbb{O}^N$. This identification requires some ordering on the finite set $ \mathbb{O}^N$, 
take for instance the first lexicographic ordering.  
In a slight abuse of language we will associate  the policy parameters interchangeably to
$ \chi$
or 
$\{ h_{t}, H_j^t \}_{0 \leq j \leq t, t \in \mathbb{Z}_{[0,N-1]}}$,
while realizing that the former is a vector and  the latter 
is a collection of maps, given that they describe the same entity. 

In this work we will  consider the control design problem strictly in the context of POB affine control laws, the sole motivation being the resulting computational tractability as it will be demonstrated in the convex design section. The closed loop system consists of 
the  open loop dynamics  \eqref{system} coupled with the model based dynamics  \eqref{model}, the purified outputs \eqref{pure_out} and the POB affine policies \eqref{control_pure_out}.

\subsection{Performance specifications}

The linear system $\mathcal{S}$ is subject to both stochastic noise as well 
as set bounded disturbances. Performance specifications will be expressed via constraints on the random state-control trajectory and its first two moments. 
At any instant of time, 
both the control vector $\mathbf{u}_t$ and state $\mathbf{\mathbf{x}}_t$ of the system are Gaussian vectors and with slight abuse of notation we introduce stacked versions of them, denoted by  $\mathbf{x}, \mathbf{u}$,
\begin{eqnarray}
\label{state-control-trajectory}
\mathbf{x} &= & \AR{c}{  \mathbf{x}_1 \\ \mathbf{x}_2 \\  \vdots \\ \mathbf{x}_{N}},~
\mathbf{u} =  \AR{c}{  \mathbf{u}_0, \\  \mathbf{u}_1 \\ \vdots \\ \mathbf{u}_{N-1}},~
\mathbf{w} = \AR{c}{\mathbf{x} \\  \mathbf{u} },
\end{eqnarray}
where $\mathbf{x} \in \mathbb{R}^{n_x N}$,  $\mathbf{U} \in \mathbb{R}^{n_u N}$, $\mathbf{w} \in \mathbb{R}^{n_w}$, $n_w = n_x  N +  n_u  N$. 
Accordingly we write for the respective mean and covariance matrix
$$
\mu_{\mathbf{w}} = \mathbb{E}[\mathbf{w}],~~\Sigma_{\mathbf{w}} = \mathbb{E}[(\mathbf{w} - \mu_{\mathbf{w}}) (\mathbf{w}- \mu_{\mathbf{w}})^{\T} ].
$$
The associated expectations are to be computed with respect to 
the joint distribution of the basic random variables  $ \bm{\theta}_0^{N-1}, \mathbf{s}_0,  
\mathbf{e}_0^{N-1}$.  We will also consider the mean and covariance matrix
of the state and control vector at specific instances in time, these
quantities are denoted as
$$
\mu_{\mathbf{x},t} = \mathbb{E}[\mathbf{x}_t],~
\Sigma_{\mathbf{x},t} = \mathbb{E}[(\mathbf{x}_t- \mu_{\mathbf{x},t}) (\mathbf{x}_t- \mu_{\mathbf{x},t})^{T}],~ 
t \in \mathbb{Z}_{[0,N]},
$$
and 
$$
\mu_{\mathbf{u},t} = \mathbb{E}[\mathbf{u}_t],~
\Sigma_{\mathbf{u},t} = \mathbb{E}[(\mathbf{u}_t- \mu_{\mathbf{u},t}) (\mathbf{u}_t- \mu_{\mathbf{u},t})^{T}],
~
t \in \mathbb{Z}_{[0,N-1]},
$$
respectively. The specifications imposed directly on the state-control trajectory are expressed via averaged quadratic inequalities. 
The constraints on its first two moments, relating to the robust density steering problem, consist of quadratic inequalities on its mean $\mu_\mathbf{w} $ and prescribing upper bounds, in the matrix sense, 
on weighted versions of the covariance matrix $ \Sigma_{\mathbf{w}}$.

The common theme between the constraints 
under consideration is the resulting explicit convex programming formulation for designing POB affine policies such that the closed loop system satisfies them in a robust manner.

\subsubsection{Averaged quadratic inequalities in $\mathbf{w}$}

Let $ r \in \mathbb{Z}_+$. We will consider averaged quadratic inequality constraints on the state-control trajectory  expressed as
\begin{equation}
\label{specification_averaged_quadratic}
\forall   \zeta \in \mathscr{D}^N, ~
\mathbb{E}[\langle \mathcal{A}_i ~  \mathbf{w},   \mathbf{w} \rangle + 2 ~ \langle \beta_i ,  \mathbf{w} \rangle ] ~ \leq~ \gamma_i, 
\end{equation}
where $i \in \mathbb{Z}_{[1,r]}$ and 
\begin{equation}
\label{parameters_averaged_quadratic}
\mathcal{A}_i \in \mathbb{S}_+^{n_w},~ \beta_i \in \mathbb{R}^{n_w }, ~\gamma_i \in \mathbb{R}.
\end{equation}
The parameters $ \{ \mathcal{A}_i, \beta_i, \gamma_i \}_{i \in
	\mathbb{Z}_r}$ defining the quadratic inequalities are assumed to be part of the problem specification.  The constraints in \eqref{specification_averaged_quadratic} once averaged out 
are to be interpreted as ``hard'' constraints in the 
uncertainty vector $ \zeta \in \mathscr{D}^N$.

Consider the left hand side of any of the inequalities that appear in \eqref{specification_averaged_quadratic}, say $ i \in \mathbb{Z}_{[1,r]}$, 
$$ \mathbb{E}[\langle \mathcal{A}_i ~  \mathbf{w},   \mathbf{w} \rangle + 2 ~ \langle \beta_i ,  \mathbf{w} \rangle ],$$  and note that we do not require for it 
to exhibit time separability, i.e. stage additivity, in its functional form.
As such our methodology for designing causal feedback laws is applicable to situations that may not be amenable to dynamic programming type arguments.

A particular scenario where such constraints are relevant consists of the decision maker having some desired targets for the state and control vector as well as allowable deviation levels to be fulfilled 
in expectation for all realizations of $ \zeta \in \mathscr{D}^N$,
for some subsets of the finite time horizon $   T_x \subset \mathbb{Z}_{[1,N]}$ and $   T_u \subset \mathbb{Z}_{[0,N-1]}$
respectively, i.e. for every realization of the uncertainty vector $ \zeta \in \mathscr{D}^N$,
\begin{eqnarray*}
	\mathbb{E}[\| \mathbf{x}_t - \hat{x}_t \|^2]  \leq \hat{\gamma}_t, ~  t \in T_x, ~~
	\mathbb{E}[\| \mathbf{u}_t - \tilde{u}_t \|^2]  \leq \tilde{\gamma}_t, ~  t \in T_u.
\end{eqnarray*}
The above formulation is one of a robust feasibility problem,  where 
$\{\hat{\gamma}_t\}_{t \in T_x}$, $\{ \tilde{\gamma}_t\}_{t \in T_u}$ are a priori fixed. However, 
our framework can be readily extended by the minimization of a convex function of the deviation levels $\{\hat{\gamma}_t\}_{
	t \in T_x}$ and 
$\{ \tilde{\gamma}_t\}_{t \in T_u}$ and as such streamlining the selection
of the latter,  while preserving tractability.

We also mention that within our scope fall problems where the
worst case value over $\zeta  \in \mathscr{D}^N$ of the 
expectation of a convex quadratic objective in the random state-control trajectory is to be  minimized over $ \chi \in \mathcal{X}$, i.e.
$$
\min_{ \chi \in \mathcal{X}} \max_{\zeta  \in \mathscr{D}^N }
\mathbb{E}[\langle \mathcal{A} ~  \mathbf{w},   \mathbf{w} \rangle + 2 ~ \langle \beta,  \mathbf{w} \rangle ]
$$
subject to further averaged quadratic inequalities that are to be satisfied robustly in the sense of \eqref{specification_averaged_quadratic}.

\subsubsection{Robust steering of densities}

The system $\mathcal{S}$ is subject to set bounded disturbances $ \zeta \in \mathscr{D}^N$ and the question arises whether one can confine the  mean and covariance 
matrix of the state-control trajectory in a certain prespecified set. 
To this end 
we will prescribe constraints to the mean of the state-control trajectory via quadratic inequalities. Let $ \hat{r} \in \mathbb{Z}_+$ and consider the constraints 
\begin{equation}
\label{specification_mean_quadratic}
\forall   \zeta \in \mathscr{D}^N, ~ \langle \hat{\mathcal{A}}_i ~  \mu_\mathbf{w},   \mu_\mathbf{w} \rangle + 2 ~ \langle \hat{\beta}_i ,  \mu_\mathbf{w} \rangle  ~ \leq~ \hat{\gamma}_i, 
\end{equation}
where $ i \in \mathbb{Z}_{[1,\hat{r}]}$ and
\begin{equation}
\label{parameters_mean_quadratic}
\hat{\mathcal{A}}_i \in \mathbb{S}_+^{n_w},~ \hat{\beta}_i \in \mathbb{R}^{n_w }, ~\hat{\gamma}_i \in \mathbb{R}.
\end{equation}
The parameters $ \{ \hat{\mathcal{A}}_i,  \hat{\beta}_i, \hat{\gamma}_i \}_{i \in \mathbb{Z}_{[1,\hat{r}]}}$ defining the quadratic inequalities are assumed to be part of the problem specification.
By appropriate choice of the parameters the constraints in \eqref{specification_mean_quadratic} allow 
one to express the requirement 
of robustly confining the mean of the state and the control vector
at different instances of time in prespecified ellipsoidal or polyhedral sets, in particular once can express constaints of the form, for all realizations $ \zeta \in \mathscr{D}^N$,
\begin{eqnarray*}
	\| \mu_{\mathbf{x},t} - \hat{\mu}_t \|^2  & \leq &  \hat{\epsilon}_t, ~
	\hat{\gamma}_t^{(-)} \leq \mu_{\mathbf{x},t}  \leq  \hat{\gamma}_t^{(+)},  ~~ t \in T_x, \\
	\| \mu_{\mathbf{u},t} - \tilde{\mu}_t \|^2 & \leq &   \tilde{\epsilon}_t, ~
	\tilde{\gamma}_t^{(-)} \leq \mu_{\mathbf{u},t}  \leq  \tilde{\gamma}_t^{(+)},  ~~t \in T_u. 
\end{eqnarray*}

As for the covariance matrix of the state-control trajectory, let us fix
$ \tilde{r}, n_k \in \mathbb{Z}_+$ and consider constraints of the form
\begin{equation}
\label{specification_covariance}
\forall   \zeta \in \mathscr{D}^N, ~  \mathcal{Q}_i  ~ \Sigma_\mathbf{w}  ~  \mathcal{Q}_i^\T \preceq \tilde{\Sigma}_i, 
\end{equation}
where $i \in \mathbb{Z}_{[1,\tilde{r}]} $
and 
\begin{equation}
\label{parameters_mean_quadratic}
\tilde{\Sigma}_i \in \mathbb{S}^{n_k}_{++},~~ 
\mathcal{Q}_i \in \mathbb{R}^{n_k \times n_w }.
\end{equation}
The parameters $ \{ \tilde{\Sigma}_i, \mathcal{Q}_i \}_{i \in \mathbb{Z}_{[1,\tilde{r}]}}$ appearing in \eqref{specification_covariance}
are assumed to be part of the problem specification.
The constraints in \eqref{specification_covariance} allow the decision maker 
by appropriate choice for instance of the weighting matrices $\{ \mathcal{Q}_i \}_{i \in \mathbb{Z}_{[1,\tilde{r}]}}$
to express requirements of the form, for all $  \zeta \in \mathscr{D}^N$,
\begin{eqnarray*}
	\Sigma_{\mathbf{x},t}  \preceq \hat{\Lambda}_t, ~ t \in T_x,~ 
	\Sigma_{\mathbf{u},t} \preceq \tilde{\Lambda}_t, ~ \forall t \in T_u,
\end{eqnarray*}
where $ \{ \hat{\Lambda}_t \}_{t \in T_x}$,  $ \{ \tilde{\Lambda}_t \}_{t \in T_u}$ are prespecified positive definite matrices.

Finally we point out that our methodology of providing an explicit convex
programming formulation for the control design problem in POB affine policies subject to constraints of the form  \eqref{specification_averaged_quadratic}, \eqref{specification_mean_quadratic}, \eqref{specification_covariance} can be augmented with further
convex constraints on the control parameters that reflect limits on their magnitudes, prescribed sparsity patterns, etc.

\subsection{Contributions}

The main contribution of this paper is the development 
of a computationally tractable procedure for designing affine policies 
in purified outputs for partially observable Markov jump linear systems
affected by external  disturbances and an unknown initial state, while
being subject to quadratic constraints.
Prior work had focused on linear dynamic models exhibiting deterministic evolution  subject to linear constraints on the finite-horizon state-control trajectory.

The advantage of affine policies based on the purified outputs over the traditional affine policy parameterization
where  the bi-affinity of the state-control trajectory
as a function of the policy parameters and the vector of the initial state and disturbances is established. This fact  sets up
theorem 1 that provides an
explicit convex program specifying, when feasible, policy parameters ensuring the validity of the design specifications.   Furthermore,
our convex relaxation is tight within  a factor associated to the approximate inhomogeneous S-lemma.
Theorem 1  serves  as the basis for deriving explicit semi-definite programming formulations for the control
design problem and is further supported by proposition 2 for the case of quadratic inequalities  and proposition 3 for the covariance constraints.  
The tractability of our approach is validated via propositions \ref{lemma1} and \ref{lemma2}.
Namely, in order to avoid exponential exploding in the number of parameters specifying the policies as $N$ grows, 
we restrict the ``switching memory'' of the policies. In that case,  when using policies that employ a truncation of fixed length  of the regime-switching sequence, the
coefficients that enter in our
convex program can be computed in polynomial in $N, M$ arithmetic operations.
This polynomial complexity result is important given that the number of possible sample paths for the underlying regime-switching process grows exponentially in $N$, the horizon under consideration.
The consideration of affine policies based on purified outputs  motivates the question as to their relation to
the classical affine policy parameterization employing the actual outputs of the system.
theorem 6 shows the equivalence of the two types
of policies, when both are allowed to depend on the entire switching sequence.

To the best of our knowledge the proposed tractable approach is the first of its kind for the problems under consideration.

\section{Convex design}

For the given finite horizon $N$ we write the state space recursion of $\mathcal{S}$, \eqref{system}, the model based dynamics \eqref{model_pure}, 
and the POB affine control law \eqref{control_pure_out} in matrix form as 
\begin{eqnarray}
\label{vecx}
\mathbf{x} & = & \underline{\mathbf{A}} ~ \mathbf{x}_0 ~ + ~ \underline{\mathbf{B}} ~ \mathbf{u} + ~ + ~ \underline{\mathbf{B}}^{(d)} ~ \zeta_d ~ + ~ \underline{\mathbf{B}}^{(s)} ~ 
\mathbf{e}, \\
\label{vecv}
\mathbf{v} & = &  \underline{\mathbf{C}}~ \mathbf{x}_0 ~ + ~ \underline{\mathbf{D}}^{(d)} ~ \zeta_d~+~ \underline{\mathbf{D}}^{(s)} ~ \mathbf{e}, \\
\label{vecu}
\mathbf{u} & = &  \underline{\mathbf{h}}~ + ~ \underline{\mathbf{H}} ~  \mathbf{v},
\end{eqnarray}
where we introduced in addition to $\mathbf{x}, \mathbf{u}$, \eqref{state-control-trajectory}, the trajectory 
signals $\zeta_d, \mathbf{e}, \mathbf{v},$
\begin{eqnarray*}
\zeta_d  =  \AR{c}{d_0 \\ d_1 \\ \vdots \\ d_{N-1}}, ~
\mathbf{e}  =   \AR{c}{\mathbf{e}_0 \\ \mathbf{e}_1 \\ \vdots \\ \mathbf{e}_{N-1}},~
\mathbf{v}  =  
 \AR{c}{\mathbf{v}_0 \\ \mathbf{v}_1 \\ \vdots \\ \mathbf{v}_{N-1}}, 
\end{eqnarray*}
while the individual matrices  $\underline{\mathbf{A}}$, $\underline{\mathbf{B}}$, 
$\underline{\mathbf{B}}^{(d)}$, $\underline{\mathbf{B}}^{(s)}$, $\underline{\mathbf{C}}$, 
$\underline{\mathbf{D}}^{(d)}$, $\underline{\mathbf{D}}^{(s)}$, $\underline{\mathbf{h}}$, $\underline{\mathbf{H}}$ are given explicitly in Appendix A.
By combining \eqref{vecx} to \eqref{vecu} we obtain for the state-control trajectory 
\begin{eqnarray}
\label{w_explicit}
 \nonumber
\mathbf{w} & = & \bm{b}~ +~
 \mathbf{w}^{(d)} ~ + ~  \mathbf{w}^{(s)}, \\
 \nonumber
 \\
 \nonumber
 \bm{b} & = & \AR{c}{ \underline{\mathbf{B}} ~ 	\underline{\mathbf{h}} \\ 	\underline{\mathbf{h}}}, ~
 \mathbf{w}^{(d)}   =  \bm{\mathcal{B}}^{(d)}  \zeta, ~
 \mathbf{w}^{(s)} = \bm{\mathcal{B}}^{(s)}  \bm{\epsilon},
 \\
 \nonumber
 \\
\nonumber 
 \bm{\mathcal{B}}^{(d)} & = &  \AR{cc}{\underline{\mathbf{B}} ~ 
	\underline{\mathbf{H}}~ \underline{\mathbf{C}} + \underline{\mathbf{B}} ~ &~  
\underline{\mathbf{B}} ~ 
	\underline{\mathbf{H}}~ \underline{\mathbf{D}}^{(d)} + \underline{\mathbf{B}}^{(d)}  \\
	\underline{\mathbf{H}}~ \underline{\mathbf{C}}  ~ &~  
	\underline{\mathbf{H}}~ \underline{\mathbf{D}}^{(d)}  }, \\
\nonumber
\\
 \bm{\mathcal{B}}^{(s)} & = & \AR{cc}{\underline{\mathbf{B}} ~ 
	\underline{\mathbf{H}}~ \underline{\mathbf{C}} + \underline{\mathbf{A}} ~ &~  
	\underline{\mathbf{B}} ~ 
	\underline{\mathbf{H}}~ \underline{\mathbf{D}}^{(s)} + \underline{\mathbf{B}}^{(s)}  \\
	\underline{\mathbf{H}}~ \underline{\mathbf{C}}  ~ &~  
	\underline{\mathbf{H}}~ \underline{\mathbf{D}}^{(s)}  }, 
\end{eqnarray}
From the  relations in \eqref{w_explicit} one can directly infer that bi-affinity of the state-control trajectory $ \mathbf{w}$ in terms of the policy parameters 
$ \chi$ and the exogenous signals $\zeta, \bm{\epsilon}$. Fix a realization of the switching signal $ \bm{\theta}$ then the realizations of  $\mathbf{w}$
are  affine in   $\chi $ for every realization of $\zeta, \bm{\epsilon}$ and vice versa $\mathbf{w}$  is affine in  $\zeta, \bm{\epsilon}$  for fixed policy parameters $ \chi$.
This fact will enable us to specify the parameters $ \chi$ in a computationally efficient manner by solving  explicit convex optimization programs and motivates the use of POB affine laws.

In the following we will provide explicit convex programming formulations for computing POB affine control laws that
guarantee the satisfaction of the performance 
specifications and further discuss their properties. 
We start out by deriving a robust optimization result that is central to our subsequent developments. 
Consider an affine function
$$
\mathcal{M} : \mathbb{R}^{\dim(N,T)}  \rightarrow \mathbb{R}^{n_w \times n_{\zeta}+1},  
$$
a matrix convex function
$$
\mathcal{V} : \mathbb{R}^{\dim(N,T)}  \rightarrow \mathbb{R}^{n_{\zeta}+1 \times n_{\zeta}+1},  
$$
in the sense that for all $ \mu \in [0,1]$ and 
$\chi, \hat{\chi} \in \mathbb{R}^{\dim(N,T)}$,
$$
\mathcal{V}[~ \mu ~  \chi~  + ~ (1-\mu)~  \hat{\chi}]
	~ \preceq ~ \mu ~ \mathcal{V}[\chi]~  +~  (1 - \mu)~ 
\mathcal{V}[\hat{\chi}],	
$$
along with the quadratic form
\begin{equation}
\label{quadratic_inh}
\forall  \zeta \in \mathscr{D}^N,~ \gamma  ~\geq ~
\zeta_e^{\T} ~ \mathcal{V}[\chi]  ~ \zeta_e ~+~ 2 \langle \beta,  ~ 
\mathcal{M}[ \chi]~  \zeta_e \rangle, 
\end{equation}
where 
$\zeta_e^{\T} = \AR{cc}{1 ,& \zeta^{\T} }$.
Let 
$$ 
\Pi_h^\T =  \AR{cc}{ 0_{n_{\zeta} \times 1 } & I_{n_\zeta}},~ \Pi_c^\T = 
\AR{cc}{ 1   &  0_{ 1 \times n_\zeta }}.
$$ 
and write the right hand side of the quadratic inequality above in its inhomogeneous form, 
\begin{equation}
\label{quadratic_actual}
\forall   \zeta   \in  \mathscr{D}^N ,     ~ 
\gamma  ~ \geq ~  \zeta^\T~
\Delta[ \chi]  ~ 
\zeta
~ +~ 2~    \Phi[ \chi]^\T ~
\zeta  ~+~ \Psi[ \chi ],
\end{equation}
where
\begin{eqnarray*}
	\Delta[ \chi ] & = & \Pi_h^\T  ~  \mathcal{V}[\chi]  ~ \Pi_h , \\
	\Phi[ \chi]^{\T}  & = &  \Pi_c^\T ~   \mathcal{V}[\chi] ~ \Pi_h ~+ ~
	\beta^\T ~ \mathcal{M}[ \chi] ~  \Pi_h , \\
	\Psi[ \chi] & = & \Pi_c^\T ~ \mathcal{V}[\chi]  ~ \Pi_c + 2 ~  \beta^\T  ~ \mathcal{M}[ \chi] ~  \Pi_c.
\end{eqnarray*}
The following theorem involves a parameter dependent version of the $S$-lemma. 
\begin{theorem} 
	\label{Theorem 1}
    Suppose that $ ( \gamma, \chi) $ can 
	be extended by $  \lambda \in \mathbb{R}^s$ and $ X \in \mathbb{S}^{n_{\zeta}+1}$ 
	to $ \xi = (\gamma, \lambda,  X, \chi)$
	such that 	
	\begin{eqnarray}
	\label{S-lemma_all}	
	F[\xi] \succeq  0, ~ \lambda  \geq 0, ~ X  \succeq  \mathcal{V}[ \chi] 
	\end{eqnarray}
	where	
	\begin{eqnarray}
	\label{S-lemma_parameters}	
	\nonumber
	F[\xi] & = & \AR{ccc}{ S[\xi ]_{(1,1)} &  & 
		S[ \xi]_{(1,2)} \\ 
		&  &\\
		\nonumber
		S[\xi]_{(1,2)}^{\T} 	&  & S[\xi]_{(2,2)} } \\
	\nonumber
	S[\xi]_{(1,1)} & = & 
	\gamma -  \psi[X,  \chi ]  - \sum_{i=1}^{s}    [\lambda]_i \\
	\nonumber
	S[ \xi]_{(1,2)} & = & 
	-  \phi[X, \chi]^{\T} \\
	S[\xi]_{(2,2)}  & = & 
	\sum_{i=1}^{s}   [ \lambda]_i   Q_{i}  - \delta[X] 
	\end{eqnarray}	
	
	and
	\begin{eqnarray}
	\nonumber
	\delta[ X] & = & \Pi_h^\T  ~  X ~ \Pi_h ,\\
	\nonumber
	\phi[  X,\chi]^{\T}  & = & \Pi_c^\T ~   X ~ \Pi_h ~+ ~
	\beta^\T ~\mathcal{M}[ \chi] ~  \Pi_h , \\
	\psi[  X,\chi] & = & \Pi_c^\T ~  X ~ \Pi_c + 2 ~  \beta^\T  ~ \mathcal{M}[ \chi] ~  \Pi_c.
	\label{coefficients_new}
	\end{eqnarray}
	Then \eqref{quadratic_inh} is satisfied and
	the feasible set defined by 
	\eqref{S-lemma_all} is convex. 
	
	In the case where  $ ( \gamma, \chi) $ 
	can not  be extended by $  \lambda $ and $ X$ such that \eqref{S-lemma_all} is satisfied, then we know that  
	$$
	\forall    \zeta \in \mathscr{D}^N,~~~~
	\zeta_e^{\T} ~ \mathcal{V}[\chi]  ~ \zeta_e ~+~ 2 \langle \beta,  ~ 
	\mathcal{M}[ \chi]~  \zeta_e \rangle \leq~ \gamma^-
	$$
	is violated for some $  \zeta \in \mathscr{D}^N$. 
	The critical level is 
	$$
	\gamma^- = 
	\begin{cases}
	\gamma,~~~~\text{when}~ s= 1,\\
	\frac{1}{\Theta}  ~\bigg( \gamma -   \Psi[ \chi] \bigg) + 
	\Psi[ \chi] -\epsilon, ~~~\text{when}~ s > 1,
	\end{cases}
	$$
	with $ \epsilon > 0 $ arbitrary and
	$$
	\Theta = 2 ~ \ln(s+1) + 2 \sqrt{\ln(s+1)} +1.
	$$
\end{theorem}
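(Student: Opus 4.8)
The plan is to split the argument into three pieces: sufficiency of \eqref{S-lemma_all} for \eqref{quadratic_inh}, convexity of the feasible set, and the quantitative converse governed by $\gamma^-$. For sufficiency I would first record the identity underlying the passage from \eqref{quadratic_inh} to \eqref{quadratic_actual}: writing $\zeta_e = \Pi_c + \Pi_h \zeta$ and using symmetry of $\mathcal{V}[\chi]$, one expands $\zeta_e^\T X \zeta_e + 2\beta^\T \mathcal{M}[\chi]\zeta_e$ into $\zeta^\T \delta[X]\zeta + 2\phi[X,\chi]^\T\zeta + \psi[X,\chi]$ with the coefficients \eqref{coefficients_new}, the same expansion with $X=\mathcal{V}[\chi]$ reproducing $\Delta,\Phi,\Psi$. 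I would then invoke the easy (sufficient) direction of the $S$-procedure: since $[\lambda]_i\geq 0$, the block matrix $F[\xi]$ of \eqref{S-lemma_parameters} is exactly the quadratic form in $\zeta_e$ obtained from $\gamma$ minus the $X$-quadratic minus $\sum_i[\lambda]_i(1-\langle Q_i\zeta,\zeta\rangle)$, so $F[\xi]\succeq 0$ forces $\gamma\geq \zeta^\T\delta[X]\zeta + 2\phi[X,\chi]^\T\zeta + \psi[X,\chi]$ for every $\zeta\in\mathscr{D}^N$. Finally $X\succeq\mathcal{V}[\chi]$ gives $\zeta_e^\T X\zeta_e\geq \zeta_e^\T\mathcal{V}[\chi]\zeta_e$, so the $X$-bound dominates the $\mathcal{V}[\chi]$-bound and \eqref{quadratic_inh} follows.

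Convexity is then immediate and is exactly the reason the lifting variable $X$ is introduced. Because $\mathcal{M}$ is affine and $X,\gamma,\lambda$ enter linearly through \eqref{coefficients_new}, the map $\xi\mapsto F[\xi]$ is affine, so $\{F[\xi]\succeq 0\}$ is the preimage of the positive semidefinite cone under an affine map and hence convex; $\{\lambda\geq 0\}$ is polyhedral; and $\{X\succeq \mathcal{V}[\chi]\}$ is convex precisely because $\mathcal{V}$ is matrix convex, since $\mu X_1 + (1-\mu)X_2 \succeq \mu\mathcal{V}[\chi_1] + (1-\mu)\mathcal{V}[\chi_2]\succeq \mathcal{V}[\mu\chi_1+(1-\mu)\chi_2]$. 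The feasible set is the intersection of these three convex sets.

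For the converse I would argue by semidefinite duality and the approximate $S$-lemma, treating the two cases separately. Observing that decreasing $X$ in the semidefinite order only enlarges $F[\xi]$, at any candidate certificate one may set $X=\mathcal{V}[\chi]$, so infeasibility of \eqref{S-lemma_all} is equivalent to infeasibility of the standard $S$-procedure relaxation for $\max_{\zeta\in\mathscr{D}^N}[\zeta^\T\Delta[\chi]\zeta + 2\Phi[\chi]^\T\zeta]$ at level $\gamma-\Psi[\chi]$. Since $\sum_i Q_i\succ 0$ renders $\mathscr{D}^N$ compact with nonempty interior, Slater holds and duality converts this infeasibility into the statement that the relaxation value strictly exceeds $\gamma-\Psi[\chi]$. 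When $s=1$ the inhomogeneous $S$-lemma is lossless, so the relaxation value equals the true maximum and one obtains $\zeta\in\mathscr{D}^N$ with the quadratic exceeding $\gamma$, i.e. $\gamma^-=\gamma$. When $s>1$ I would homogenize by adjoining the constraint $\tau^2\leq 1$ (using symmetry of $\mathscr{D}^N$ so the value at $\tau=1$ is not lost), producing a homogeneous quadratic over an intersection of $s+1$ ellipsoids, and then apply the approximate $S$-lemma, whose tightness factor is exactly $\Theta=2\ln(s+1)+2\sqrt{\ln(s+1)}+1$. This yields a genuine maximizer with objective at least $\frac{1}{\Theta}(\gamma-\Psi[\chi])$; adding back $\Psi[\chi]$ and absorbing the strictness into an arbitrary $\epsilon>0$ produces a $\zeta$ violating the level-$\gamma^-$ inequality.

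I expect the last paragraph to be the crux. Sufficiency and convexity are essentially bookkeeping around the identity \eqref{coefficients_new} and matrix convexity, but the converse requires three delicate ingredients: the reduction from the lifted $X$-problem back to the genuine $\mathcal{V}[\chi]$-problem, a clean statement of strong semidefinite duality to turn certificate infeasibility into a lower bound on the relaxation value, and the correct inhomogeneous form of the approximate $S$-lemma so that only the homogeneous part $\gamma-\Psi[\chi]$ is divided by $\Theta$ while the constant $\Psi[\chi]$ is retained exactly. Pinning down the constant $\Theta$ and verifying that the homogenization inflates the number of ellipsoids to exactly $s+1$ is where the main care is required.
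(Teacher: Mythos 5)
Your proposal is correct and follows essentially the same route as the paper: sufficiency via the Lagrangian quadratic-form identity behind $F[\xi]\succeq 0$ evaluated at $\zeta_e=(1,\zeta)$ together with $X\succeq\mathcal{V}[\chi]$, convexity from affinity of $F$ plus matrix convexity of $\mathcal{V}$, and the converse via the exact ($s=1$) and approximate ($s>1$) inhomogeneous $S$-lemma with the same split of $\gamma-\Psi[\chi]$ from the constant term. Your observation that one may take $X=\mathcal{V}[\chi]$ without loss (since decreasing $X$ only enlarges $F[\xi]$) is a mild streamlining of the paper's explicit minimization over $X\succeq\mathcal{V}[\chi]$, but the substance is the same.
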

\begin{proof}
	First we address the convexity of \eqref{S-lemma_all} in the decision variables 	
	$\xi$ .
	The constraint  $$ \lambda \geq 0 $$ is linear in 
	$  \lambda$ and therefore convex. 
	Given that $\mathcal{M}$ is affine in $ \chi$ 
	from
	\eqref{coefficients_new} one can deduce that the maps 
	$ 
	\delta[ X], \phi[ X, \chi ],
	\psi[ X,\chi]
	$
	are all affine in the their arguments and so is the map,
	$
	F[\xi].
	$
	It follows therefore that the constraint 
	$$
	F[\xi]  \succeq 0
	$$
	is convex in $\xi$. 
	As for the inequality 
	\begin{equation}
	\label{matrix_convex_ineq}
	X \succeq  \mathcal{V}[\chi],
	\end{equation}
	its convexity follows from the fact that $\mathcal{V}[\chi]$
	is matrix convex. 
	Now suppose that   $ (\gamma, \chi) $ can 
	be extended by $  \lambda $ and $ X$ such that
	\eqref{S-lemma_all} is satisfied.  From \eqref{S-lemma_all} one can deduce that 
	for any $t \in \mathbb{R}$, $ \zeta \in \mathbb{R}^{n_{\zeta}}$,
	\begin{eqnarray}
	\label{quadratic_form_lagrange_relaxed}
	\nonumber 
	\gamma t^2 & \geq &   t^2 \psi[X,  \chi ] + \sum_{i=1}^s 
	[\lambda]_i ~ ( t^2 - \zeta^\T ~   Q_{i} ~
	\zeta ) \\
	&+ & 2 ~ t ~ \phi[X,  \chi ]^{\T} ~\zeta + 
	\zeta^\T ~ \delta[X]   ~\zeta  
	\end{eqnarray}
	The above inequality implies, by taking $t=1$ and \eqref{coefficients_new} into account,  that $ 
	\forall     \zeta \in \mathscr{D}^N$,
	\begin{equation}
	\label{bounding}
	\gamma  \geq 
	\zeta_e^\T  X \zeta_e ~+~ 2 \langle \beta,  
	\mathcal{M}[ \chi]  \zeta_e \rangle, ~ \text{where}~    \zeta_e = \AR{c}{1 \\  \zeta },
	\end{equation}
	and since $ X  \succeq  \mathcal{V}[ \chi] $
	the performance specification
	\eqref{quadratic_inh} follows.
	
	We consider now the situation when the given convex problem
	\eqref{S-lemma_all} is not feasible.
	First note that given $ (\gamma, \chi )$, existence of 
	$X, \lambda$ such that \eqref{S-lemma_all}
	holds is equivalent to the statement
	\begin{equation}
	\label{S-lemma_piecemal}
\exists X \succeq  \mathcal{V}[ \chi]:	\bigg(\exists \lambda \in \mathbb{R}^{s}: ~
	F[\xi]  \succeq 0,
	\lambda \geq 0\bigg).
	\end{equation}
	If the given pair $ (\gamma, \chi) $ can not 
	be extended by $  \lambda $ and $ X$ such that
	\eqref{S-lemma_all} is satisfied then we will distinguish two cases.
	\begin{itemize}
		\item \textbf{Case 1}, $s=1$:  
		We apply the
		inhomogeneous S-lemma,   \cite{ bental6}, to the statement inside the parentheses in \eqref{S-lemma_piecemal} to write equivalently,
		$ \exists X \succeq \mathcal{V}[ \chi]:$ $\forall \zeta \in \mathscr{D}^N,$
		$$
		\label{extended_quadratic} 
		\gamma  \geq   \zeta ^\T  \delta[X]    \zeta  + 2  \phi[X_,  \chi ]^{\T} \zeta 
		+  \psi[X,  \chi ].
		$$
		By taking \eqref{coefficients_new} into account, the last statement is equivalent to 
		$\exists X \succeq \mathcal{V}[ \chi] : \forall \zeta \in \mathscr{D}^N, $ 
		$$
		\gamma  ~\geq ~
		\zeta_e^\T ~ X~ \zeta_e ~+~ 2 \langle \beta,  ~ 
		\mathcal{M}[ \chi]~  \zeta_e \rangle, 
		$$
		which in turn is equivalent  to \eqref{quadratic_inh} since 
		$\exists X \succeq \mathcal{V}[ \chi] $.  Thus 
		if a given pair $ (\gamma, \chi) $ can not 
		be extended by $  \lambda $ and $ X$ such that
		\eqref{S-lemma_all} is satisfied then \eqref{quadratic_inh} is violated for some $  \zeta \in \mathscr{D}^N$.
		
		\item  \textbf{Case 2}, $s > 1$:  Similar reasoning applies 
		to this case as well, 
		with the only difference 
		being that the  approximate inhomogeneous S-lemma, \cite{ bental6},
		is employed.  The tightness factor is adapted from \cite{jud_17}  see also Appendix A2.
		Let 
		\begin{eqnarray*}
			\SDP[\chi]  =  \min \bigg\{ \omega ~& : & ~  
			F[\omega,
			\lambda, X , \chi ] \succeq 0,  \\
			&& ~\lambda \geq 0,  
			X \succeq \mathcal{V}[ \chi]  \bigg\}.
		\end{eqnarray*}
		If $ (\gamma, \chi)$ can not be extended
		by $X, \lambda$ such that  \eqref{quadratic_inh}
		holds then we know that 
		$
		\gamma < \SDP[\chi].
		$
		Furthermore,
		\begin{eqnarray*}
			\SDP[\chi]  =  \min \bigg\{&&  \tilde{\SDP}[X, \chi]  +   \bm{\psi}_j[  X,\chi]  :   \\  	
			&& 	X \succeq V[\chi]  \bigg\},~~
		\end{eqnarray*}
		where	
		\begin{eqnarray}
		\label{derivation}
		&&	\tilde{\SDP}[X, \chi]   =  \min \bigg\{ \tau  ~:~
		\lambda \geq 0, \\
		\nonumber
		&&
		\AR{cc}{\tau - \sum_{i=1}^{s}  ~  [\lambda]_i   & -  \phi[X,  \chi ]^{\T}\\
			& \\
			-\phi[X,  \chi ]& 
			\sum_{i=1}^{s}   [ \lambda]_i   Q_{i}  -  
			\delta[X ] } \succeq 0
		\bigg \}.
		\end{eqnarray}
		Applying the  approximate inhomogeneous S-lemma to \eqref{derivation} gives
		$$
		\tilde{\SDP}[X, \chi]~ \leq~ \Theta ~\tilde{\OPT}[X, \chi]
		$$
		with
		$$
		\Theta = 2 ~ \ln(s+1) + 2 \sqrt{\ln(s+1)} +1
		$$
		and 
		$$
		\tilde{\OPT}[X, \chi] =   \max_{ \zeta \in \mathscr{D}^N }
		\bigg\{  \zeta^\T~
		X ~ 
		\zeta
		+ 2 ~ \phi[X, \chi]^\T ~
		\zeta  \bigg\}.
		$$
		Let 
		$$
		\OPT[\chi] =   \max_{ \zeta \in \mathscr{D}^N }
		\bigg\{  \zeta^\T~
		\Delta[ \chi]  ~ 
		\zeta
		+ 2 ~ \Phi[ \chi]^\T ~
		\zeta  \bigg \},
		$$
		by taking into account that  $	X \succeq V[\chi]   \succeq 0 $
		\begin{eqnarray*}
			\Theta ~ \OPT[\chi] ~+ ~ \Psi[ \chi]  =
			\min \bigg \{ && \Theta ~  \tilde{\OPT}[X, \chi]   +   \psi[  X,\chi]  ~:~    \\
			&& X \succeq  \mathcal{V}[\chi]  +  \bigg\},
		\end{eqnarray*}
		which leads to
		$$
		\gamma -\Psi[ \chi] ~<~ \Theta ~ \OPT[\chi].
		$$
		Constraint violation occurs for any $\gamma^-$ that 
		satisfies $$ \gamma^- < \OPT[\chi] + \Psi[ \chi].
		$$ 
		From the last two strict inequalities the desired result follows. 	
	\end{itemize}
\end{proof}

\noindent Theorem 1 will serve as the basis for deriving explicit semi-definite programming formulations 
for the control design problem with respect to the quadratic inequalities under consideration. 

\subsection{Averaged quadratic inequalities in  $\mathbf{w}$ }
We start out by repeating \eqref{specification_averaged_quadratic} for some fixed index $j \in \mathbb{Z}_{[1,r]}$,
\begin{equation}
\label{specification_j}
\forall   \zeta \in \mathscr{D}^N, ~
\mathbb{E}[\langle \mathcal{A}_j ~  \mathbf{w},   \mathbf{w} \rangle + 2 ~ \langle \beta_j ,  \mathbf{w} \rangle ] ~ \leq~ \gamma_j.
\end{equation}
One can write 
$$
\bm{b}~ +~
\mathbf{w}^{(d)} =  (\mathcal{F} + \mathcal{G}) ~ \zeta_e,
$$
where
$$
\mathcal{F} = 
\AR{ccccc}{ \underline{\mathbf{B}} ~ \underline{\mathbf{h}}, & &  
	\underline{\mathbf{B}}  ~ \underline{\mathbf{H}} ~ \underline{\mathbf{C}} , & &
	\underline{\mathbf{B}}  ~ \underline{\mathbf{H}} ~ \underline{\mathbf{D}}^{(d)} \\ 
	& & & &\\
	\underline{\mathbf{h}}, & & 
	\underline{\mathbf{H}} ~ \underline{\mathbf{C}} , & &
	\underline{\mathbf{H}} ~ \underline{\mathbf{D}}^{(d)} }
$$
and
$$
\mathcal{G}  =  \AR{ccccc}{ 0, & &  
	\underline{\mathbf{A}} , & &
	\underline{\mathbf{B}}^{(d)}   \\ 
	& & & &\\
	0, & & 
	0 , & &
	0 }.
$$
In view of \eqref{w_explicit} by setting 
\begin{equation}
\label{coefficient_M}
\mathcal{M} = \mathbb{E}[\mathcal{F} + \mathcal{G}], 
\end{equation}
and 
\begin{equation}
\label{coefficient_V}
\mathcal{V} =\mathbb{E}[  (\mathcal{F} + \mathcal{G})^{\T} ~  \mathcal{A}_j~ (\mathcal{F} + \mathcal{G}) ]+ \mathbb{E}[\mathcal{H}]
\end{equation}
with 
$$
\mathcal{H} = \AR{ccc}{ \trace\bigg[ \mathcal{B}^{(s)^{\T}}   \mathcal{A}_j    \mathcal{B}^{(s)} \Sigma_{\bm{\epsilon}} \bigg]   & &  0 \\ & & \\ 0 &  & 0   },
$$
\eqref{quadratic_inh} and \eqref{specification_j} are equivalent as far as their functional form is concerned. 
What remains to show is that $ \mathcal{M} $ and $\mathcal{V}$ defined in \eqref{coefficient_M}  and \eqref{coefficient_V} are affine and matrix convex respectively.

\begin{proposition}
$ \mathcal{M} $ and $\mathcal{V}$ defined in \eqref{coefficient_M}  and \eqref{coefficient_V} are affine and matrix convex in $\chi$ respectively. 
\end{proposition}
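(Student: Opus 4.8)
The plan is to treat $\mathcal{M}$ and $\mathcal{V}$ separately, reducing both to two elementary facts: that the expectation here is a nonnegatively-weighted average over the finitely many switching paths $\theta \in \mathbb{O}^N$ (together with the Gaussian noise) and therefore preserves both affinity and matrix convexity, and that for a positive semidefinite weight the map $M \mapsto M^\T A M$ precomposed with an affine map is matrix convex. First I would record, by inspecting the explicit blocks, that for each fixed realization $\theta$ the matrices $\underline{\mathbf{A}}, \underline{\mathbf{B}}, \underline{\mathbf{C}}, \underline{\mathbf{D}}^{(d)}, \underline{\mathbf{B}}^{(d)}$ are constants, while the selection $\chi \mapsto (\underline{\mathbf{h}}, \underline{\mathbf{H}})$ that picks out the parameters attached to $\theta_{[t,T]}$ is linear in $\chi$. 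Hence every block of $\mathcal{F}$ (each of the form $\underline{\mathbf{B}}\,\underline{\mathbf{h}}$, $\underline{\mathbf{B}}\,\underline{\mathbf{H}}\,\underline{\mathbf{C}}$, $\underline{\mathbf{H}}\,\underline{\mathbf{C}}$, $\underline{\mathbf{H}}\,\underline{\mathbf{D}}^{(d)}$, etc.) is linear in $\chi$, whereas $\mathcal{G}$ is independent of $\chi$; thus $\mathcal{F}+\mathcal{G}$ is affine in $\chi$ for each $\theta$. The same inspection of the blocks in \eqref{w_explicit} shows $\mathcal{B}^{(s)}$ is affine in $\chi$.

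The affinity of $\mathcal{M}$ is then immediate: $\mathcal{F}+\mathcal{G}$ is affine in $\chi$ for every $\theta$, the distribution of $\bm{\theta}$ does not depend on $\chi$, and $\mathcal{M} = \mathbb{E}[\mathcal{F}+\mathcal{G}]$ is a fixed convex combination of these affine maps, hence affine.

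For the matrix convexity of $\mathcal{V}$ the key step — and the one I expect to carry the weight of the argument — is the lemma that, for $A \in \mathbb{S}^{n_w}_+$ and an affine $M(\chi)$, the map $\chi \mapsto M(\chi)^\T A M(\chi)$ is matrix convex. The plan is to prove it by factoring $A = L^\T L$, writing $M(\chi)^\T A M(\chi) = N(\chi)^\T N(\chi)$ with $N(\chi) = L\,M(\chi)$ still affine, and computing for $\mu \in [0,1]$
\begin{equation*}
\mu\,N(\chi)^\T N(\chi) + (1-\mu)\,N(\hat\chi)^\T N(\hat\chi) - N(\chi_\mu)^\T N(\chi_\mu) = \mu(1-\mu)\,\big(N(\chi)-N(\hat\chi)\big)^\T\big(N(\chi)-N(\hat\chi)\big) \succeq 0,
\end{equation*}
where $\chi_\mu = \mu\chi+(1-\mu)\hat\chi$ and the last inequality holds since the right-hand side is a Gram matrix scaled by $\mu(1-\mu)\geq 0$. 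Applying this with $A = \mathcal{A}_j \succeq 0$ and $M = \mathcal{F}+\mathcal{G}$ shows $(\mathcal{F}+\mathcal{G})^\T \mathcal{A}_j (\mathcal{F}+\mathcal{G})$ is matrix convex for each fixed $\theta$; since matrix convexity is preserved under the nonnegatively-weighted average $\mathbb{E}[\cdot]$ over $\bm{\theta}$, the first summand of $\mathcal{V}$ is matrix convex in $\chi$.

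It remains to dispatch the noise term $\mathbb{E}[\mathcal{H}]$. Here I would note that $\mathcal{H}$ has a single nonzero entry, $\trace[\mathcal{B}^{(s)\T}\mathcal{A}_j\mathcal{B}^{(s)}\Sigma_{\bm{\epsilon}}]$, which under the factorizations $\mathcal{A}_j = L^\T L$ and $\Sigma_{\bm{\epsilon}} = R R^\T$ equals $\| L\,\mathcal{B}^{(s)}\,R\|_F^2$, a convex quadratic in $\chi$ because $\mathcal{B}^{(s)}$ is affine in $\chi$. A symmetric matrix whose only nonzero entry is a nonnegative scalar in the $(1,1)$ position is positive semidefinite, so for each $\theta$ the matrix-convexity defect of $\mathcal{H}$ is exactly such a matrix and hence $\succeq 0$; averaging over $\bm{\theta}$ preserves this, giving matrix convexity of $\mathbb{E}[\mathcal{H}]$. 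Since the sum of two matrix convex functions is matrix convex, $\mathcal{V}$ is matrix convex in $\chi$, completing the argument. The only genuine obstacle is the quadratic-matrix lemma displayed above; every other step is an appeal to linearity of expectation and closure of the matrix convex cone under nonnegative combinations.
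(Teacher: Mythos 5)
Your proposal is correct and follows essentially the same route as the paper: affinity of $\mathcal{M}$ via linearity of $\mathcal{F}$ in $\chi$ and the $\chi$-independence of the switching distribution, and matrix convexity of $\mathcal{V}$ via the convexity-defect computation $\mu(1-\mu)\,\mathcal{F}[\chi-\hat{\chi}]^\T \mathcal{A}_j\, \mathcal{F}[\chi-\hat{\chi}] \succeq 0$ followed by averaging over $\bm{\theta}$ (your factorization $\mathcal{A}_j = L^\T L$ is a cosmetic variant of the paper's direct use of $\mathcal{A}_j \succeq 0$). Your explicit treatment of the $\mathbb{E}[\mathcal{H}]$ term, which the paper dismisses with ``similar reasoning applies,'' is a welcome bit of added care but does not change the argument.
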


\begin{proof}
Given the statistical description of the switching process via $ \pi, P  $ one has 
for any $ 0 \leq \tau \leq N-1$,
\begin{equation}
\label{probability_path}
\Pr[ (\theta_0 , \theta_1, \hdots , \theta_\tau)] = 
\bigg(\prod_{t=0}^{\tau-1} 
[P]_{\theta_{t+1}, \theta_{t}}\bigg) ~ \pi_{\theta_0} .
\end{equation}
Furthermore, from \eqref{coefficient_M} one has
$$
\mathcal{M} = \sum_{\theta
	\in \mathbb{O}^N} ~ \Pr[\theta ] ~ (\mathcal{F} + \mathcal{G}).
$$
Given that $ \mathcal{F}$ is linear in $\chi$ and $\mathcal{G}$ is independent of $\chi$ it follows that
$ \mathcal{M}$ is affine in $\chi$.
As for $\mathcal{V}$ first note that for every realization of $\theta \in \mathbb{O}^N$ $ \mathcal{F} + \mathcal{G}$ 
as well as $ \mathcal{H}$ are matrix convex in $ \chi$. We demonstrate the calculation for $ \mathcal{F} + \mathcal{G}$ 
similar reasoning applies to $\mathcal{H}$.
Let $ \chi , \hat{\chi} \in \mathcal{X}$ and $ \mu \in [0,1]$ and set $ \mathcal{J} = (\mathcal{F} + \mathcal{G})^{\T}  \mathcal{A}_j (\mathcal{F} + \mathcal{G})$, then
\begin{eqnarray*}
	\mathcal{R}& = & 
	\mu ~ \mathcal{J}[\chi] ~+~ (1-\mu)~ \mathcal{J}[\hat{\chi}] - 
	\mathcal{J}[\mu ~ \chi ~+~ (1-\mu)~ \hat{\chi}] 	\\
	& = & 
	\mu ~ (\mathcal{F}[\chi] + \mathcal{G})^\T ~ \mathcal{A}_j ~ 
	(\mathcal{F}[\chi] + \mathcal{G}) + (1-\mu) ~ 
	(\mathcal{F}[\hat{\chi}] + \mathcal{G})^\T ~ \mathcal{A}_j ~ 
	(\mathcal{F}[\hat{\chi}] + \mathcal{G}) \\
	&& - (\mathcal{F}[\mu  \chi + (1- \mu) \hat{\chi}  ] + \mathcal{G})^\T ~ 
	\mathcal{A}_j ~ 
	(\mathcal{F}[\mu  \chi + (1- \mu) \hat{\chi}  ] + \mathcal{G}) \\
	& = & \mu ~ (1 - \mu) ~ \bigg( \mathcal{F}[\chi]^\T ~  \mathcal{A}_j ~ \mathcal{F}[\chi] - \mathcal{F}[\chi]^\T ~  \mathcal{A}_j ~ \mathcal{F}[\hat{\chi}]  -  \mathcal{F}[\hat{\chi}]^\T ~  \mathcal{A}_j ~ \mathcal{F}[\chi]  +   \mathcal{F}[\hat{\chi}]^\T ~  \mathcal{A}_j ~ \mathcal{F}[\hat{\chi}]\bigg) \\
	&=& \mu ~ (1 - \mu) ~ \mathcal{F}[\chi - \hat{\chi}]^\T ~ \mathcal{A}_j ~\mathcal{F}[\chi - \hat{\chi}] \\
	&\succeq& 0
\end{eqnarray*}
From \eqref{coefficient_V} one has 
$
\mathcal{V}  = \sum_{\theta 
	\in \mathbb{O}^N} ~ \Pr[\theta ] ~ \mathcal{J}
$
and therefore $ \mathcal{V} $ is matrix convex in $ \chi$. 
\end{proof}
Applicability of theorem 1 is established and as such we have arrived to an explicit semidefinite programing formulation for designing the control parameters in regards to the specification
\eqref{specification_j}.

\subsection{Quadratic inequalities in  $\mu_{\mathbf{w}}$ }

We start out by repeating \eqref{specification_mean_quadratic}  for some fixed index $j \in \mathbb{Z}_{[1,\hat{r}]}$,
\begin{equation}
\label{specification_j_mean}
\forall   \zeta \in \mathscr{D}^N, ~ \langle \hat{\mathcal{A}}_j ~  \mu_\mathbf{w},   \mu_\mathbf{w} \rangle + 2 ~ \langle \hat{\beta}_j ,  \mu_\mathbf{w} \rangle  ~ \leq~ \hat{\gamma}_j, 
\end{equation}
In view of \eqref{w_explicit} and \eqref{coefficient_M} one has  
$$
\mu_\mathbf{w} = \mathcal{M} ~ \zeta_e,
$$
and by setting 
$$
\mathcal{V} = \mathcal{M}^{\T} ~ \hat{\mathcal{A}}_j ~ \mathcal{M},
$$
\eqref{quadratic_inh} and \eqref{specification_j_mean} are equivalent as far as their functional form is concerned. 
Matrix convexity of $\mathcal{V}$ follows by similar arguments as in proposition 2. As such applicability of theorem 1 is valid in this case as well  
and therefore we have arrived to an explicit semidefinite programing formulation for designing the control parameters in regards to the specification
\eqref{specification_j_mean}.

\subsection{Robust steering of densities }

For this problem we assume that $m=1$ thus stochasticity is induced by the external noise in the absence of Markovian-switching.
The quadratic inequality on $\mu_w$ has already been discussed we will focus on the semidefinite constraint on the covariance matrix. 
Given that $ \mathbb{E}[\bm{\epsilon}] = 0$, one has
\begin{equation}
\label{mean_formula}
\mu_{\mathbf{w}} = \bm{b}
~ + ~ \mathbf{w}^{(d)},~~ \mathbf{w} - \mu_{\mathbf{w}} = 
\mathbf{w}^{(s)},
\end{equation}
and given that the basic random variables $ \mathbf{s}_0, 
\{ \mathbf{e} \}_0^{N-1}
$
are jointly independent it follows that  
$$
\Sigma_{\bm{\epsilon}} = \mathbb{E}[ \bm{\epsilon} \bm{\epsilon}^{\T}] = \AR{cc}{\Sigma_0 & 0 \\  0 & I}
$$
and therefore 
\begin{equation}
\label{covariance_formula}
\Sigma_{\mathbf{w}} =  \bm{\mathcal{B}}^{(s)} ~ 
\Sigma_{\bm{\epsilon}}~ \bm{\mathcal{B}}^{{(s)}^{\T}}.
\end{equation}

\begin{proposition}
	Fix $j \in \mathbb{Z}_{[1,\tilde{r}]}$.
	The policy associated to the control parameters 
	$ \chi$  
	leads to a closed loop system where the specification under consideration is satisfied robustly, i.e.
	\begin{equation}
	\label{specification_j_covariance}
	\forall   \zeta \in \mathscr{D}^N, ~  \mathcal{Q}_j  ~ \Sigma_\mathbf{w}  ~  \mathcal{Q}_j^\T \preceq \tilde{\Sigma}_j
	\end{equation}
if and only if 
\begin{equation}
\label{steering_covariance}
 \mathcal{Q}_j  ~ \bm{\mathcal{B}}^{(s)} ~ 
 \Sigma_{\bm{\epsilon}}~ \bm{\mathcal{B}}^{{(s)}^{\T}}  ~  \mathcal{Q}_j^\T \preceq \tilde{\Sigma}_j
\end{equation}
	The feasible set defined by 
	\eqref{steering_covariance} is convex. 

\end{proposition}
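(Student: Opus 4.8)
The plan is to treat the two assertions separately, the equivalence first and the convexity second. For the equivalence, the key observation is that, under the standing assumption $m=1$, the covariance matrix $\Sigma_{\mathbf{w}}$ carries no dependence whatsoever on the set-bounded uncertainty $\zeta$. Indeed, \eqref{mean_formula} shows that the entire $\zeta$-dependence of $\mathbf{w}$ is absorbed into the mean, $\mu_{\mathbf{w}} = \bm{b} + \mathbf{w}^{(d)} = \bm{b} + \bm{\mathcal{B}}^{(d)}\zeta$, while the fluctuation $\mathbf{w} - \mu_{\mathbf{w}} = \mathbf{w}^{(s)} = \bm{\mathcal{B}}^{(s)}\bm{\epsilon}$ depends only on the stochastic noise $\bm{\epsilon}$. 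Since $m=1$ removes the switching randomness, $\bm{\mathcal{B}}^{(s)}$ is a single deterministic matrix once $\chi$ is fixed, and by \eqref{covariance_formula} so is $\Sigma_{\mathbf{w}} = \bm{\mathcal{B}}^{(s)}\Sigma_{\bm{\epsilon}}\bm{\mathcal{B}}^{(s)^{\T}}$. Consequently the left-hand side of \eqref{specification_j_covariance} does not vary with $\zeta$, the universal quantifier $\forall \zeta \in \mathscr{D}^N$ is vacuous, and \eqref{specification_j_covariance} holds if and only if the single inequality $\mathcal{Q}_j \Sigma_{\mathbf{w}} \mathcal{Q}_j^\T \preceq \tilde{\Sigma}_j$ holds, which upon substituting \eqref{covariance_formula} is precisely \eqref{steering_covariance}.

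For convexity, I would first record that, with $m=1$, every system matrix appearing in the expression for $\bm{\mathcal{B}}^{(s)}$ in \eqref{w_explicit} is constant and the only $\chi$-dependent block is the policy matrix $\underline{\mathbf{H}}$, which is linear in $\chi$; hence $\bm{\mathcal{B}}^{(s)}$, and therefore $\mathcal{Q}_j\bm{\mathcal{B}}^{(s)}$, is affine in $\chi$. Next I would exploit $\Sigma_{\bm{\epsilon}} \succ 0$ (it is block-diagonal with blocks $\Sigma_0 \succ 0$ and the identity) to factor $\Sigma_{\bm{\epsilon}} = \Sigma_{\bm{\epsilon}}^{1/2}\Sigma_{\bm{\epsilon}}^{1/2}$ and introduce $G[\chi] = \mathcal{Q}_j \bm{\mathcal{B}}^{(s)} \Sigma_{\bm{\epsilon}}^{1/2}$, which remains affine in $\chi$. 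The constraint \eqref{steering_covariance} then reads $G[\chi]\, G[\chi]^\T \preceq \tilde{\Sigma}_j$.

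To conclude, I would invoke the matrix convexity of the map $\chi \mapsto G[\chi]\, G[\chi]^\T$. Writing $\chi_\mu = \mu\chi + (1-\mu)\hat{\chi}$ for $\chi,\hat{\chi} \in \mathcal{X}$ and $\mu \in [0,1]$, affineness of $G$ yields
$$\mu\, G[\chi]G[\chi]^\T + (1-\mu)\, G[\hat{\chi}]G[\hat{\chi}]^\T - G[\chi_\mu]\,G[\chi_\mu]^\T = \mu(1-\mu)\,(G[\chi]-G[\hat{\chi}])\,(G[\chi]-G[\hat{\chi}])^\T \succeq 0,$$
which is exactly the computation already carried out for $\mathcal{R}$ in the proof of Proposition 2. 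Since a matrix-convex map has convex sublevel sets in the Loewner order, the set $\{\chi : G[\chi]G[\chi]^\T \preceq \tilde{\Sigma}_j\}$ is convex. Equivalently, because $\tilde{\Sigma}_j \succ 0$, a Schur-complement argument recasts \eqref{steering_covariance} as the linear matrix inequality
$$\AR{cc}{\tilde{\Sigma}_j & G[\chi] \\ G[\chi]^\T & I} \succeq 0,$$
which is affine in $\chi$ and hence defines a convex, semidefinite-representable feasible set. The only point demanding care is the $\zeta$-independence of $\Sigma_{\mathbf{w}}$, which is what trivializes the robustness; everything else is a routine affineness and Schur-complement verification, so I do not anticipate a substantive obstacle.
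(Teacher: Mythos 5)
Your proof is correct and follows essentially the same route as the paper's: the equivalence comes from \eqref{covariance_formula} together with the observation that $\Sigma_{\mathbf{w}}$ is independent of $\zeta$, and convexity follows from the matrix convexity of $\chi \mapsto \mathcal{Q}_j\bm{\mathcal{B}}^{(s)}\Sigma_{\bm{\epsilon}}\bm{\mathcal{B}}^{(s)^{\T}}\mathcal{Q}_j^{\T}$ via the same $\mu(1-\mu)$ computation used for Proposition 2. Your added Schur-complement reformulation as an explicit LMI is a welcome (and correct) extra, but it does not change the substance of the argument.
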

\begin{proof}
	The proof is immediate, using \eqref{covariance_formula}
we establish the equivalence of \eqref{specification_j_covariance} and \eqref{steering_covariance}. The left hand side of
\eqref{steering_covariance} 
$$
\mathcal{Q}_j  ~ \bm{\mathcal{B}}^{(s)} ~ 
\Sigma_{\bm{\epsilon}}~ \bm{\mathcal{B}}^{{(s)}^{\T}}  ~  \mathcal{Q}_j^\T
$$	
is matrix convex in $\chi$ by similar arguments as in the proof of proposition 2 and as such the 
feasible set of  \eqref{steering_covariance} is convex. 	
\end{proof}
In retrospect we recognize that 
the choice of the uncertainty set for the initial state and set bounded disturbances is dictated by the computational machinery we employ in order to 
arrive at a tractable design procedure. 
It was this particular choice of $ \mathscr{D}^N$ that enabled the exploitation of  
the bi-affinity of the state-control trajectory induced by the POB control laws during the subsequent use of the approximate inhomogeneous $S$-lemma.

\subsection{Polynomial-time Computation for Linear and Quadratic Forms }
\label{sec:52}
In this subsection we  provide recursive computation procedures  for the terms appearing in the linear
form $ \mathcal{M}$ and the quadratic form 
$  \mathcal{V}$,  that and enter in \eqref{S-lemma_all}
as part of  our convex programming formulation.
We show that for a fixed $T$
the coefficients of the functions $\mathcal{M}$  and $\mathcal{V}$ 
can be computed in polynomial in  $N, m$  number  of arithmetic operations.
To avoid overly long expressions we  introduce some notation.
Given $ \tau, T \in \mathbb{N}$ and two sequences of positive integers, $ n_0, n_1, \hdots, n_N \in \mathbb{Z}_+$,
$ m_0, m_1, \hdots, m_N \in \mathbb{Z}_+$ define the matrix valued functions
\begin{eqnarray*}
	\mathbf{f}_i  :  \mathbb{O} \rightarrow \mathbb{R}^{n_{i+1} \times n_i}, ~~
	\mathbf{q}_i  :  \mathbb{O} \rightarrow \mathbb{R}^{n_{i+1} \times n_i}, 
\end{eqnarray*}
where $i = 0, 1, \hdots, \tau-1, \tau+1, \tau+2, 
\hdots N-1$. For $ t \geq s > \tau$ and $ \tau > t \geq s$ let 
\begin{eqnarray*}
	\mathbf{F}_{t,s}  :  \mathbb{O}^{t-s+1} \rightarrow \mathbb{R}^{n_{t+1} \times n_s}, ~~
	\mathbf{Q}_{t,s}  :  \mathbb{O}^{t-s+1} \rightarrow \mathbb{R}^{n_{t+1} \times n_s}, 
\end{eqnarray*}
with 
\begin{eqnarray*}
	\mathbf{F}_{t,s}[\theta_{t}, \hdots, \theta_{s}] & = & \mathbf{F}_{t, \tau}[\bm{\theta}_{[t,t-s]}]
	= \mathbf{f}_{t}[\theta_{t}] ~\mathbf{f}_{t-1}[\theta_{t-1}] ~ \hdots ~ \mathbf{f}_{\tau}[\theta_{s}], \\
	\mathbf{Q}_{t,s}[\theta_{t}, \hdots, \theta_{s}] & = & \mathbf{Q}_{t, \tau}[\bm{\theta}_{[t,t-s]}]
	= \mathbf{q}_{t}[\theta_{t}] ~\mathbf{q}_{t-1}[\theta_{t-1}] ~ \hdots ~ \mathbf{q}_{\tau}[\theta_{s}].
\end{eqnarray*}
We first discuss the computation of $\mathcal{M} $.
The entries of $\mathcal{M} $ are sums of polynomially many terms of the form 
\begin{equation}
\label{linear_term}
\mathbb{E}\bigg[\mathbf{f}_{N-1}[\theta_{N-1}] ~\mathbf{f}_{N-2}[\theta_{N-2}] ~ \hdots ~ \mathbf{f}_{\tau+1}[\theta_{\tau+1}] ~ \mathbf{f}_{\tau}[\bm{\theta}_{[\tau,T]}] ~ 
\mathbf{f}_{\tau-1}[\theta_{\tau-1}]  ~ \hdots ~ \mathbf{f}_{\tau-T}[\theta_{\tau-T}]   ~ \hdots ~
\mathbf{f}_0[\theta_0]\bigg],
\end{equation}
where $\mathbf{f}_0, \hdots, \mathbf{f}_{N-1}$ are matrix valued functions of conformable dimensions 
so that the product makes sense. The actual entries are given explicitely in the appendix. 
While the number of elements in the set $ \mathbb{O}^N$ grows exponentially in $N$ 
we will show in the following proposition that the complexity of computing each term of the form \eqref{linear_term}
grows polynomially in $N$ for a fixed $T$.  The proof of this result is given in Appendix C.

\begin{proposition} \label{lemma1}
	Consider a Markov chain $ \bm{\theta} \in \mathbb{O}^N$ with statistical description given via
	$ \pi, P$ 
	and the matrix valued expression 
	\begin{equation}
	\label{typical_linear_term}
	\mathbf{T}_l  = \mathbb{E}\bigg[\mathbf{F}_{N-1, \tau+1}[\bm{\theta}_{[N-1,N-\tau-2]}] ~ \mathbf{f}_{\tau}[\bm{\theta}_{[\tau,T]}] ~ 
	\mathbf{F}_{\tau-1, 0}[\bm{\theta}_{[\tau-1,\tau-1]}]\bigg].
	\end{equation}
	Define the matrix valued functions 
	\begin{eqnarray}
	\label{recursive_linear_1}
	\mathbf{g}_{N-2}[\theta_{N-2}] & = & \sum_{\theta_{N-1} \in \mathbb{O}} 
	[P]_{\theta_{N-1}, \theta_{N-2}} \mathbf{f}_{N-1}[\theta_{N-1}],  \\
	\label{recursive_linear_2}
	\mathbf{g}_{s}[\theta_{s}] & = & \sum_{\theta_{s+1} \in \mathbb{O}} 
	[P]_{\theta_{s+1}, \theta_{s}} ~  \mathbf{g}_{s+1}[\theta_{s+1}]~ \mathbf{f}_{s+1}[\theta_{s+1}], ~~~
	s = N-3, N-4, \hdots, \tau+1, \tau,  \\
	\label{recursive_linear_3}
	\mathbf{g}_{\tau-T-1}[\theta_{\tau-T-1}] & = &
	\sum_{(\theta_{\tau-T},\theta_{\tau-T+1}, \hdots, \theta_{\tau} )} 
	[P]_{\theta_{\tau}, \theta_{\tau-1}} \hdots [P]_{\theta_{\tau-T}, \theta_{\tau-T-1}}
	~\mathbf{g}_{\tau}[\theta_{\tau}] ~ 
	\mathbf{F}_{\tau-1, \tau-T}[\bm{\theta}_{[\tau-1,T-1]}], \\
	\label{recursive_linear_4}
	\mathbf{g}_{s}[\theta_{s}] & = & \sum_{\theta_{s+1} \in \mathbb{O}} 
	[P]_{\theta_{s+1}, \theta_{s}} ~  \mathbf{g}_{s+1}[\theta_{s+1}]~ \mathbf{f}_{s+1}[\theta_{s+1}], ~~~
	s = \tau - T -2 ,  \tau - T - 3, \hdots, 0.  
	\end{eqnarray}
	It follows that 
	$$
	\mathbf{T}_l = \sum_{\theta_0 \in \mathbb{O}}	\Pr[ \theta_0 ] ~ \mathbf{g}_0[\theta_0] ~\mathbf{f}_0[\theta_0].
	$$
\end{proposition}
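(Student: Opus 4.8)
The plan is to prove the claimed recursion by repeatedly conditioning on the most recently summed state and exploiting the Markov property, peeling off the product of matrix-valued functions one factor at a time from the ``high end'' (index $N-1$) inward. The key observation is that the expectation in \eqref{typical_linear_term} is a weighted sum over all paths $\theta \in \mathbb{O}^N$ of a product of matrices ordered by decreasing index, where the weight is the path probability \eqref{probability_path}. Writing the path probability as a product of transition factors $[P]_{\theta_{t+1},\theta_t}$ times the initial weight $\pi_{\theta_0}$, I would rearrange the nested sum so that the innermost summation is over $\theta_{N-1}$, which appears only in the leftmost factor $\mathbf{f}_{N-1}[\theta_{N-1}]$ and in the single transition factor $[P]_{\theta_{N-1},\theta_{N-2}}$. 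That summation collapses exactly to $\mathbf{g}_{N-2}[\theta_{N-2}]$ as defined in \eqref{recursive_linear_1}.

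Next I would argue inductively that after summing out $\theta_{N-1},\theta_{N-2},\dots,\theta_{s+1}$, the partially-summed quantity has the form $\mathbf{g}_s[\theta_s]$ times the remaining (not-yet-summed) product, with $\mathbf{g}_s$ satisfying \eqref{recursive_linear_2}. The inductive step is the identity
\begin{equation*}
\sum_{\theta_{s+1} \in \mathbb{O}} [P]_{\theta_{s+1},\theta_s} ~ \mathbf{g}_{s+1}[\theta_{s+1}] ~ \mathbf{f}_{s+1}[\theta_{s+1}] = \mathbf{g}_s[\theta_s],
\end{equation*}
which holds because $\theta_{s+1}$ enters the accumulated product only through the leftover factor $\mathbf{f}_{s+1}[\theta_{s+1}]$ and the transition $[P]_{\theta_{s+1},\theta_s}$; all earlier indices have already been absorbed into $\mathbf{g}_{s+1}$ by the induction hypothesis. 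This recursion runs cleanly down to $s=\tau$, i.e. up to the factor $\mathbf{f}_{\tau}[\bm{\theta}_{[\tau,T]}]$ that carries the length-$T$ switching memory.

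The one genuinely non-routine step, and the place I expect the bookkeeping to be most delicate, is the transition across the memory-dependent factor $\mathbf{f}_{\tau}[\bm{\theta}_{[\tau,T]}]$ in \eqref{recursive_linear_3}. Unlike the other factors, this one depends on the block $(\theta_{\tau-T},\dots,\theta_\tau)$ rather than a single index, so I cannot peel off one index at a time through it; instead I would sum out the entire block $(\theta_{\tau-T},\theta_{\tau-T+1},\dots,\theta_\tau)$ in a single step, using the corresponding segment of transition factors $[P]_{\theta_\tau,\theta_{\tau-1}}\cdots[P]_{\theta_{\tau-T},\theta_{\tau-T-1}}$. The accumulated $\mathbf{g}_\tau[\theta_\tau]$ from the preceding stage multiplies $\mathbf{f}_\tau[\bm{\theta}_{[\tau,T]}]$ on the left, while $\mathbf{F}_{\tau-1,\tau-T}[\bm{\theta}_{[\tau-1,T-1]}]$ supplies the matching block of the lower product $\mathbf{F}_{\tau-1,0}$; summing the block against the transition factors yields $\mathbf{g}_{\tau-T-1}[\theta_{\tau-T-1}]$ exactly as in \eqref{recursive_linear_3}. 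The subtlety is purely in matching index ranges and the direction of matrix multiplication so that the left-to-right order of the factors is preserved through this block contraction.

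Finally, below index $\tau-T-1$ the factors again depend on single indices, so the same one-step peeling recursion \eqref{recursive_linear_4} applies verbatim down to $s=0$. At that point every index except $\theta_0$ has been summed out, leaving $\mathbf{g}_0[\theta_0]\,\mathbf{f}_0[\theta_0]$ weighted by the initial distribution $\pi_{\theta_0}=\Pr[\theta_0]$, which gives the claimed closed form $\mathbf{T}_l = \sum_{\theta_0 \in \mathbb{O}} \Pr[\theta_0] ~ \mathbf{g}_0[\theta_0] ~ \mathbf{f}_0[\theta_0]$. The polynomial complexity then follows immediately, since each of the $O(N)$ recursion stages involves summing $O(m)$ terms for each of $m$ values of the conditioning state, except the single block stage at $\tau$ whose cost is $O(m^{T+2})$, which is constant in $N$ for fixed $T$.
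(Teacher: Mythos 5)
Your proposal is correct and follows essentially the same route as the paper: the paper expresses each $\mathbf{g}_s$ as a conditional expectation and peels off indices from $N-1$ downward via the law of iterated expectations and the Markov property, which is exactly your rearrangement of the nested path sum using the factorization of the path probability, including the single block contraction over $(\theta_{\tau-T},\dots,\theta_\tau)$ at the memory-dependent factor. No gaps.
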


\vspace{0.1in}

Without utilizing Proposition~\ref{lemma1} computing $\mathbf{T}_l$ via
$$
\mathbf{T}_l = \sum_{\bm{\theta} \in \mathbb{O}^N}~ 	\Pr[\bm{\theta} ]~ 
\mathbf{F}_{N-1, \tau+1}[\bm{\theta}_{[N-1,N-\tau-2]}] ~ \mathbf{f}_{\tau}[\bm{\theta}_{[\tau,T]}] ~ 
\mathbf{F}_{\tau-1, 0}[\bm{\theta}_{[\tau-1,\tau-1]}]
$$
requires building $m^N$ matrix products of the form $\mathbf{F}_{N-1, \tau+1}[\bm{\theta}_{[N-1,N-\tau-2]}] ~ \mathbf{f}_{\tau}[\bm{\theta}_{[\tau,T]}] ~ 
\mathbf{F}_{\tau-1, 0}[\bm{\theta}_{[\tau-1,\tau-1]}]$. 
Each long product requires the multiplication of $N$ matrices. On the other hand utilization of Proposition~\ref{lemma1} requires 
multiplication of 2 matrices $m  (N-T)$ times as well as multiplication of $T+1$ matrices $m^{(T+1)}$ times, making the computation of  the coefficients of the affine function 
$ \mathcal{M}$  polynomial in $N, m$.

Now let us consider the computation of 
$
\mathcal{V},
$ 
which follows a similar route to that of $\mathcal{M} $. The only difference is that 
one needs to consider quadratic expressions. 
Partition for the fixed $j \in \mathbb{Z}_{[1,r]}$, $\mathcal{A}_j$ as 
$$
\mathcal{A}_j = \AR{cc}{\mathcal{A}_{xx,j} &  \mathcal{A}_{xu,j} \\ 
	\mathcal{A}_{ux,j} & \mathcal{A}_{uu,j}},~~~\text{where}~~~ 
\mathcal{A}_{ux,j}^\T = \mathcal{A}_{xu,j},
$$
from \eqref{w_explicit} and \eqref{coefficient_V} it follows that
$$
\mathcal{V} = \AR{lll}{\mathcal{V}_{(I,I)} & \mathcal{V}_{(I,II)} & 
	\mathcal{V}_{(I,III)} \\ 
	\mathcal{V}_{(II,I)} &\mathcal{V}_{(II,II)} & \mathcal{V}_{(II,III)} \\
	\mathcal{V}_{(III,I)} &\mathcal{V}_{(III,II)} & \mathcal{V}_{(III,III)}}
$$

The entries of $\mathcal{V}  $ are sums of polynomially many terms of the form 
\begin{eqnarray}
\label{quadratic_term}
\nonumber
&& \mathbb{E}\bigg[ 
\bigg( \mathbf{Q}_{N-1, \tau+1}[\bm{\theta}_{[N-1,N-\tau-2]}] ~ 
\mathbf{q}_{\tau}[\bm{\theta}_{[\tau,T]}] ~ 
\mathbf{Q}_{\tau-1, 0}[\bm{\theta}_{[\tau-1,\tau-1]}]
\bigg)^\T ~ \mathbf{S} ~\\
&& \bigg( \mathbf{F}_{N-1, \tau+1}[\bm{\theta}_{[N-1,N-\tau-2]}] ~ 
\mathbf{f}_{\tau}[\bm{\theta}_{[\tau,T]}] ~ 
\mathbf{F}_{\tau-1, 0}[\bm{\theta}_{[\tau-1,\tau-1]}]
\bigg) \bigg]
\end{eqnarray}
where $\mathbf{f}_0, \hdots, \mathbf{f}_{N-1}$ and $\mathbf{q}_0, \hdots, \mathbf{q}_{N-1} $ are matrix valued functions of conformable dimensions 
so that the product makes sense, while $\mathbf{S}$ is a constant matrix. 
While the number of elements in the set $ \mathbb{O}^N$ grows exponentially in $N$ 
we will show in the following  proposition that the complexity of computing each term of the form \eqref{quadratic_term}
terms grows polynomially in $N$ for a fixed $T$. Again the proof of this result is relegated to Appendix D.

\begin{proposition} \label{lemma2}
	Consider a Markov chain $ \bm{\theta} \in \mathbb{O}^N$ with statistical description given via $ \pi$, $P$ 
	and the matrix valued expression 
	\begin{eqnarray}
	\label{typical_quadratic_term}
	\nonumber
	\mathbf{T}_q  =  \mathbb{E}\bigg[&&
	\bigg( \mathbf{Q}_{N-1, \tau+1}[\bm{\theta}_{[N-1,N-\tau-2]}] ~ 
	\mathbf{q}_{\tau}[\bm{\theta}_{[\tau,T]}] ~ 
	\mathbf{Q}_{\tau-1, 0}[\bm{\theta}_{[\tau-1,\tau-1]}]
	\bigg)^\T ~ \mathbf{S} ~ \\
	&&  \bigg( \mathbf{F}_{N-1, \tau+1}[\bm{\theta}_{[N-1,N-\tau-2]}] ~ 
	\mathbf{f}_{\tau}[\bm{\theta}_{[\tau,T]}] ~ 
	\mathbf{F}_{\tau-1, 0}[\bm{\theta}_{[\tau-1,\tau-1]}]
	\bigg) \bigg] 
	\end{eqnarray}
	where  $\mathbf{f}_0, \hdots, \mathbf{f}_{N-1}, \mathbf{q}_0, \hdots, \mathbf{q}_{N-1}$ 
	are matrix valued functions of conformable dimensions so that the product makes sense, while $ \mathbf{S}$ is a constant matrix.  
	Define the matrix valued functions 
	\begin{eqnarray}
	\label{recursive_quadratic}
	\mathbf{g}_{N-2}[\theta_{N-2}] & = & \sum_{\theta_{N-1} \in \mathbb{O}} 
	[P]_{\theta_{N-1}, \theta_{N-2}} ~ \mathbf{q}_{N-1}[\theta_{N-1}]^\T~
	\mathbf{S} ~  \mathbf{f}_{N-1}[\theta_{N-1}],  \\
	\nonumber
	\mathbf{g}_{s}[\theta_{s}] & = & \sum_{\theta_{s+1} \in \mathbb{O}} 
	[P]_{\theta_{s+1}, \theta_{s}} ~  
	~ \mathbf{q}_{s+1}[\theta_{s+1}]^\T~
	\mathbf{g}_{s+1}[\theta_{s+1}]~ \mathbf{f}_{s+1}[\theta_{s+1}], \\ ~
	&& s = N-3, N-4, \hdots, \tau+1, \tau,  \\
	\nonumber
	\mathbf{g}_{\tau-T-1}[\theta_{\tau-T-1}] & = &
	\sum_{(\theta_{\tau-T},\theta_{\tau-T+1}, \hdots, \theta_{\tau} )} 
	[P]_{\theta_{\tau}, \theta_{\tau-1}} \hdots [P]_{\theta_{\tau-T}, \theta_{\tau-T-1}} ~ \bigg( \mathbf{q}_{\tau}[\bm{\theta}_{[\tau,T]}] ~ \mathbf{Q}_{\tau-1, \tau-T}[ \bm{\theta}_{[\tau-1,T-1]}] \bigg)^\T,\\
	&& 
	~ \mathbf{g}_{\tau}[\theta_{\tau}] ~
	\bigg( \mathbf{f}_{\tau}[\bm{\theta}_{[\tau,T]}] ~ 
	\mathbf{F}_{\tau-1, \tau-T}[ \bm{\theta}_{[\tau-1,T-1]}] \bigg), \\
	\mathbf{g}_{s}[\theta_{s}] & = & \sum_{\theta_{s+1} \in \mathbb{O}} 
	[P]_{\theta_{s+1}, \theta_{s}} ~  
	~ \mathbf{q}_{s+1}[\theta_{s+1}]^\T~
	\mathbf{g}_{s+1}[\theta_{s+1}]~ \mathbf{f}_{s+1}[\theta_{s+1}], ~~~
	s = \tau - T -2 ,   \hdots, 0.  
	\end{eqnarray}
	It follows that 
	$$
	\mathbf{T}_q = \sum_{\theta_0 \in \mathbb{O}}	\Pr[ \theta_0 ] ~ \mathbf{q}_0[\theta_0]^\T ~ \mathbf{g}_0[\theta_0] ~\mathbf{f}_0[\theta_0].
	$$
\end{proposition}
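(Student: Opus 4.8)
The plan is to reduce the expectation to an explicit sum over the sample paths of the Markov chain and then reorganize that sum by peeling off the switching indices one layer at a time, beginning with $\theta_{N-1}$ (the factors flanking the central matrix $\mathbf{S}$) and ending with $\theta_0$. First I would invoke the path-probability factorization \eqref{probability_path}, so that $\Pr[\bm{\theta}] = \pi_{\theta_0}\prod_{t=0}^{N-2}[P]_{\theta_{t+1},\theta_t}$ for $\bm{\theta}\in\mathbb{O}^N$, and expand the products $\mathbf{Q}_{t,s},\mathbf{F}_{t,s}$ into their single-index factors. Transposing the left block turns the integrand into the symmetric ``sandwich''
$$
\mathbf{q}_0[\theta_0]^\T \cdots \mathbf{q}_{N-1}[\theta_{N-1}]^\T ~ \mathbf{S} ~ \mathbf{f}_{N-1}[\theta_{N-1}] \cdots \mathbf{f}_0[\theta_0],
$$
in which each switching index $\theta_s$ appears once on the left and once on the right (with $\theta_{\tau-T},\ldots,\theta_\tau$ collectively feeding the coupled factors $\mathbf{q}_\tau[\bm{\theta}_{[\tau,T]}]$ and $\mathbf{f}_\tau[\bm{\theta}_{[\tau,T]}]$).

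Then I would carry out the summation starting from the central factor $\mathbf{S}$ and moving outward. Summing over $\theta_{N-1}$ against the weight $[P]_{\theta_{N-1},\theta_{N-2}}$ produces exactly the base term $\mathbf{g}_{N-2}[\theta_{N-2}]$ of \eqref{recursive_quadratic}. By induction, suppose that after eliminating $\theta_{N-1},\ldots,\theta_{s+1}$ the accumulated inner factor conditioned on $\theta_s$ equals $\mathbf{g}_s[\theta_s]$; summing the next index $\theta_{s+1}$ against $[P]_{\theta_{s+1},\theta_s}$, wrapped on the left by $\mathbf{q}_{s+1}[\theta_{s+1}]^\T$ and on the right by $\mathbf{f}_{s+1}[\theta_{s+1}]$, reproduces the recursion for $\mathbf{g}_s$. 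This step is valid because each transition kernel couples only adjacent states, so the summation over $\theta_{s+1}$ commutes past all remaining (lower-index) factors without disturbing the sandwiched structure.

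The one place demanding care is the switching-memory boundary at index $\tau$. There $\mathbf{q}_\tau$ and $\mathbf{f}_\tau$ depend on the whole block $\bm{\theta}_{[\tau,T]}=(\theta_{\tau-T},\ldots,\theta_\tau)$, so the block cannot be eliminated one index at a time. Instead I would sum the entire block $\theta_\tau,\ldots,\theta_{\tau-T}$ at once against the chained weight $[P]_{\theta_\tau,\theta_{\tau-1}}\cdots[P]_{\theta_{\tau-T},\theta_{\tau-T-1}}$, using the already-computed $\mathbf{g}_\tau[\theta_\tau]$ (which by the induction above encodes the conditional contribution of all indices above $\tau$) as the inner core; this yields $\mathbf{g}_{\tau-T-1}[\theta_{\tau-T-1}]$ of \eqref{recursive_quadratic}. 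The main obstacle is precisely verifying that this block elimination is internally consistent --- that the chained transition weights together with the coupled factors $\mathbf{Q}_{\tau-1,\tau-T}$ and $\mathbf{F}_{\tau-1,\tau-T}$ align so that the conditional Markov structure is preserved and $\mathbf{g}_\tau$ is inserted in the correct central position.

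Once past the boundary, the single-index recursion resumes for $s=\tau-T-2,\ldots,0$, exactly as before. Applying the initial weight $\pi_{\theta_0}=\Pr[\theta_0]$ to the final layer $\mathbf{q}_0[\theta_0]^\T\,\mathbf{g}_0[\theta_0]\,\mathbf{f}_0[\theta_0]$ then gives the claimed identity for $\mathbf{T}_q$. The whole argument mirrors the linear case of Proposition~\ref{lemma1}, the only new ingredient being the symmetric transpose-and-$\mathbf{S}$ sandwiching, which is preserved at each elimination step and is what motivates the two-sided form $\mathbf{q}_{s+1}[\theta_{s+1}]^\T\,\mathbf{g}_{s+1}[\theta_{s+1}]\,\mathbf{f}_{s+1}[\theta_{s+1}]$ of the recursion.
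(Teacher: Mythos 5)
Your proof is correct and follows essentially the same route as the paper's: the paper phrases each elimination step as a conditional expectation $\mathbb{E}[\,\cdot\,|\,\theta_s]$ combined with the law of iterated expectations and the Markov property, which is exactly the explicit sum against $[P]_{\theta_{s+1},\theta_s}$ (and the chained weights over the block at $\tau$) that you carry out after factorizing the path probability. The order of elimination (from $\theta_{N-1}$ inward to $\theta_0$), the block treatment at the switching-memory boundary, and the final weighting by $\pi_{\theta_0}$ all match the paper's argument.
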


\vspace{0.1in}

Without utilizing Proposition~\ref{lemma2} computing $\mathbf{T}_q$ via
\begin{eqnarray*}
	\mathbf{T}_q  =  \sum_{\bm{\theta} \in \mathbb{O}^N}~ 	\Pr[\bm{\theta} ]~
	&&\bigg( \mathbf{Q}_{N-1, \tau+1}[\bm{\theta}_{[N-1,N-\tau-2]}] ~ 
	\mathbf{q}_{\tau}[\bm{\theta}_{[\tau,T]}] ~ 
	\mathbf{Q}_{\tau-1, 0}[\bm{\theta}_{[\tau-1,\tau-1]}]
	\bigg)^\T ~ \mathbf{S} ~ \\
	&&  \bigg( \mathbf{F}_{N-1, \tau+1}[\bm{\theta}_{[N-1,N-\tau-2]}] ~ 
	\mathbf{f}_{\tau}[\bm{\theta}_{[\tau,T]}] ~ 
	\mathbf{F}_{\tau-1, 0}[\bm{\theta}_{[\tau-1,\tau-1]}]
	\bigg) 
\end{eqnarray*}
requires building $m^N$ matrix products of the form 
$$ \bigg( \mathbf{Q}_{N-1, \tau+1}[\bm{\theta}_{[N-1,N-\tau-2]}] ~ 
\mathbf{q}_{\tau}[\bm{\theta}_{[\tau,T]}] ~ 
\mathbf{Q}_{\tau-1, 0}[\bm{\theta}_{[\tau-1,\tau-1]}]
\bigg)^\T ~ \mathbf{S} ~ \\
\bigg( \mathbf{F}_{N-1, \tau+1}[\bm{\theta}_{[N-1,N-\tau-2]}] ~ 
\mathbf{f}_{\tau}[\bm{\theta}_{[\tau,T]}] ~ 
\mathbf{F}_{\tau-1, 0}[\bm{\theta}_{[\tau-1,\tau-1]}]
\bigg). $$
Each long product requires the multiplication of $2~N~+1~$ matrices. 
On the other hand utilization of Proposition~\ref{lemma2} requires 
multiplication of 3 matrices $m (N-T)$ times as well as multiplication of $2  ~T +  3$ matrices $m^{(T+1)}$ times, making
the computation of the coefficients of $ \mathcal{V}  $ polynomial in $N,m$.

\subsection{Relation between POB and OB affine control laws}

Our motivation for using POB affine control laws is computational tractability. 
We will discuss the relation to the traditional affine policy based on this actual outputs of the system. 
To this end consider the classical output-affine policy 
\begin{equation}
\label{control_out}
\mathbf{u}_t = g_{t}[\bm{\theta}_{[t,T]}] + \sum_{i=0}^t
G_i^t[\bm{\theta}_{[t,T]}]~ \mathbf{y}_i,~~
~t \in  \mathbb{Z}_{[0,N-1]},
\end{equation}
where
\begin{equation}
\label{parameters_out_control}
g_{t} :  \mathbb{O}^{\min{\{T+1,t+1\}}}  \rightarrow \mathbb{R}^{n_u},~ G_j^t :    \mathbb{O}^{\min{\{T+1,t+1\}}} \rightarrow \mathbb{R}^{n_u \times n_x},
\end{equation}
$0 \leq j \leq t, ~~ t \in \mathbb{Z}_{[0,N-1]}$.

When employing the classical output-based (OB) affine policy  $\{ g_{t}, G_j^t \}_{j \leq t, t \leq N-1} $ 
the state-control trajectory is affine 
in the initial state and disturbances while being highly nonlinear in the parameters of the policy \cite{bental2}.
The very first question 
is which of them is more flexible, as far as achievable behavior of the controlled system is concerned. 
The equivalence between POB and OB affine policies for deterministic linear time varying systems was established in 
\cite{bental2} in terms of the respective families of achievable closed loop state-control trajectories. 
A similar result can be derived in the case of Markovian switching with one important caveat. 
For Markov jump linear systems
the equivalence between the affine OB and affine POB policies assumes that both policies can depend, 
in a non-anticipative manner, on the entire sequence of the switching sequence. In order to avoid exponential exploding in the number of parameters specifying the policies as N grows, 
we, normally  restrict the ``switching memory'' $T$ of the policies, by restricting their parameters to depend on (at most) a fixed moderate number of the latest values of the switching signal. 
With this restriction, the two classes of affine policies in question are, in general, not equivalent  and the respective families of achievable state-control trajectories are in a ``general position'' to each other. The formal statement follows.
\begin{theorem} 
	\label{Theorem 1}
	Consider a  realization of the initial state and disturbances 
	$  \zeta \in\mathscr{D}^N$ and let $T = N-1$.
	\begin{itemize}	
		\item [(i)] For every affine OB policy  there exists an POB affine policy  
		 such that the resulting respective state-control trajectories 
		of the associated closed-loop systems are the same. 
		\item [(ii)] Vice versa, for every POB affine policy there exists an affine  OB policy such that the resulting respective state-control trajectories 
		of the associated closed-loop systems are the same. 
	\end{itemize}
\end{theorem}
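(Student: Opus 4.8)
The plan is to exploit the two structural facts established earlier in the paper. First, by \eqref{model_pure} the purified outputs $\mathbf{v}_0^{N-1}$ are a fixed function of $(\zeta,\bm{\epsilon},\bm{\theta})$ that does not depend on the control policy at all. Second, by \eqref{model} the auxiliary output $\hat{\mathbf{y}}_t = C_t[\bm{\theta}_t]\hat{\mathbf{x}}_t$ is a strictly causal affine function of the already-applied controls $\mathbf{u}_0^{t-1}$, with $\hat{\mathbf{y}}_0 = 0$ since $\hat{\mathbf{x}}_0 = 0$; moreover $\mathbf{y}_t = \hat{\mathbf{y}}_t + \mathbf{v}_t$ by \eqref{pure_out}. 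Finally, since $T = N-1$ we have $\bm{\theta}_{[t,N-1]} = (\theta_0,\dots,\theta_t)$, so in both \eqref{control_out} and \eqref{control_pure_out} the stage-$t$ gains are permitted to depend on the entire switching history up to time $t$; this is precisely the degree of freedom that makes the two classes coincide.

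For part (i) I would fix a switching realization $\theta$ and argue by induction on $t$ that, along the closed loop generated by a given OB policy \eqref{control_out}, the control $\mathbf{u}_t$ is an affine function of $\mathbf{v}_0^t$ whose matrix coefficients depend only on $\theta_0^t$. At $t=0$, since $\hat{\mathbf{y}}_0 = 0$ and $\mathbf{y}_0 = \mathbf{v}_0$, one reads off $h_0 = g_0$ and $H_0^0 = G_0^0$. For the inductive step, the hypothesis makes $\mathbf{u}_0^{t-1}$ affine in $\mathbf{v}_0^{t-1}$, hence $\hat{\mathbf{x}}_t$ and therefore $\hat{\mathbf{y}}_t = C_t[\theta_t]\hat{\mathbf{x}}_t$ are affine in $\mathbf{v}_0^{t-1}$; substituting $\mathbf{y}_i = \hat{\mathbf{y}}_i + \mathbf{v}_i$ into $\mathbf{u}_t = g_t + \sum_{i=0}^t G_i^t \mathbf{y}_i$ and collecting terms exhibits $\mathbf{u}_t$ as an affine function of $\mathbf{v}_0^t$, from which the POB gains $h_t[\theta_0^t]$ and $H_i^t[\theta_0^t]$ are obtained. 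Because the purified outputs are policy-independent, feeding these gains into \eqref{control_pure_out} reproduces the very same control sequence, hence the same $\hat{\mathbf{x}}$, the same $\mathbf{y}$, and, through \eqref{system}, the same state-control trajectory $\mathbf{w}$ for every realization.

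Part (ii) is the inverse of the same lower-triangular construction. Fixing $\theta$ and proceeding again by induction, I would assume $\mathbf{u}_0^{t-1}$ is affine in $\mathbf{y}_0^{t-1}$; then $\hat{\mathbf{y}}_t$ is affine in $\mathbf{y}_0^{t-1}$ as well, and substituting $\mathbf{v}_i = \mathbf{y}_i - \hat{\mathbf{y}}_i$ into $\mathbf{u}_t = h_t + \sum_{i=0}^t H_i^t \mathbf{v}_i$ and collecting yields OB gains $g_t[\theta_0^t]$ and $G_i^t[\theta_0^t]$, with trajectory matching following exactly as before. Structurally, what both directions use is that the map between the histories $\mathbf{v}_0^t$ and $\mathbf{y}_0^t$ is affine and block lower-triangular with identity diagonal blocks (since $\mathbf{y}_t - \mathbf{v}_t = \hat{\mathbf{y}}_t$ is strictly causal), hence invertible stage by stage without any rank or nonsingularity hypothesis on the system matrices.

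The main obstacle, and the reason the statement insists on $T = N-1$, is the bookkeeping of the switching dependence. Although each stage-$t$ gain in either policy uses only a window of $\bm{\theta}$, the auxiliary state $\hat{\mathbf{x}}_t$ is built from the products $A_{t-1}[\theta_{t-1}]\cdots A_0[\theta_0]$ and therefore carries the full history $\theta_0^{t-1}$; consequently the affine coefficients produced by the substitutions above genuinely depend on all of $\theta_0^t$. This dependence is admissible only when the switching memory is unrestricted, i.e. $T = N-1$, which is what guarantees that the constructed $h_t, H_i^t$ (respectively $g_t, G_i^t$) are legitimate policy parameters in the sense of \eqref{parameters_control} (respectively \eqref{parameters_out_control}). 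For $T < N-1$ the same substitution would in general require richer switching dependence than the truncated parametrization allows, which is exactly why the two families are then only in general position rather than equal.
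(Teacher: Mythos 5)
Your proposal is correct and follows essentially the same route as the paper: both arguments rest on the policy-independence of the purified outputs, the identity $\mathbf{y}_t = \hat{\mathbf{y}}_t + \mathbf{v}_t$, and an induction exploiting the strictly causal (lower-triangular with identity diagonal) affine relation between the histories $\mathbf{y}_0^t$ and $\mathbf{v}_0^t$, with the base case $\mathbf{v}_0 = \mathbf{y}_0$. The only cosmetic difference is that the paper's induction establishes the affine map between the output histories first and then composes with the fixed policy, whereas you run the induction directly on $\mathbf{u}_t$; your added remarks on why $T=N-1$ is needed match the paper's discussion preceding the theorem.
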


\textbf{Proof:}
The purified outputs are the outputs of a dynamical model that is driven solely by the initial state and disturbances, has state vector
$ \bm{\delta}_t = \mathbf{x}_t - \hat{\mathbf{x}}_t $, $ t =0,1, \hdots, N-1,$ and  state space realization
\begin{eqnarray*}
\nonumber
\bm{\delta}_0 & = & \mathbf{z},\\
\bm{\delta}_{t+1} & = & \mathbf{A}_t[\theta_t] ~ \bm{\delta}_t + \mathbf{R}_t[\theta_t] ~ \mathbf{d}_t,   \\
\nonumber
\mathbf{v}_t  & = & \mathbf{C}_t[\theta_t] ~ \bm{\delta}_t +  \mathbf{D}_t[\theta_t] ~ \mathbf{d}_t,~~~  
t =0,1, \hdots, N-1.
\end{eqnarray*}

Let 
$$\mathbf{y}^\T = \AR{cccc}{\mathbf{y}_0^\T, \mathbf{y}_1^\T, \hdots, \mathbf{y}_{N-1}^\T}^\T \in \mathbb{R}^{n_y  N},$$
denote the trajectory 
the actual outputs.
Depending on the particular choice of  policy one can write
\begin{equation}
\label{ vector_u_pure_out}
\mathbf{u} =\underline{\mathbf{h}} ~+~ \underline{\mathbf{H}} ~ \mathbf{v}
\end{equation}
for an affine policy in the purified outputs and 
\begin{equation}
\label{ vector_u_out}
\mathbf{u} =\underline{\mathbf{g}} ~+~ \underline{\mathbf{G}} ~ \mathbf{y}
\end{equation}
for an affine policy in the actual outputs of the system, 
where 
$$
\underline{\mathbf{g}} = \AR{c}{g_0[\bm{\theta}_{[0,T]}] \\  g_1[\bm{\theta}_{[1,T]}] \\ \vdots \\ g_{N-1}[\bm{\theta}_{[N-1,T]}]}
$$
and
$\underline{\mathbf{G}} \in \mathbb{R}^{n_u  N \times n_y  N} $ consist of $N^2$ matrices of size $ n_u \times n_y$.
The $(k,j)$-th block is
$$
[\underline{\mathbf{G}}]_{k,j} = 	\begin{cases}
G_j^k[\bm{\theta}_{[k,T]}], ~& k \geq j, \\
~~~~~0_{n_u \times n_y}, ~& k < j,~~ k,j \in \mathbb{Z}_{[0,N-1]}.
\end{cases}
$$	
(i) Fix a particular affine policy as in \eqref{control_out}. It suffices to show that for
properly chosen maps 
\begin{equation}
\mathbf{q}_{t} :  \mathbb{O}^{\min{\{T+1,t+1\}}} \rightarrow \mathbb{R}^{n_y},~~~~ 
\mathbf{Q}_j^t :  \mathbb{O}^{\min{\{T+1,t+1\}}} \rightarrow \mathbb{R}^{n_y \times n_y}, ~~~~j \leq t, ~
t =0,1, \hdots, N-1,
\end{equation}
one has 
\begin{equation}
\label{pure2out}
\mathbf{y}_t = \mathbf{q}_{t}[\bm{\theta}_{[t,T]}] + \mathbf{Q}_0^t[\bm{\theta}_{[t,T]}] ~ \mathbf{v}_0 + 
\mathbf{Q}_1^t[\bm{\theta}_{[t,T]}] ~ \mathbf{v}_1 + \hdots + \mathbf{Q}_t^t[\bm{\theta}_{[t,T]}] ~ \mathbf{v}_t, ~~ t =0,1, \hdots N-1 .
\end{equation}
The above relations together with the fixed affine laws would imply that
\begin{equation}
\label{equivalence_laws}
\mathbf{u} \equiv \underline{\mathbf{g}} ~+~ \underline{\mathbf{G}} ~ \mathbf{y} \equiv 
\underline{\mathbf{h}} ~+~ \underline{\mathbf{H}} ~ \mathbf{v}
\end{equation}
for appropriately chosen parameters $\{ h_{t}, H_j^t \}_{j \leq t,0 \leq t \leq N-1}$ 
so that the affine policy under consideration can be indeed 
represented as an affine policy via purified outputs. 

The proof of \eqref{pure2out} proceeds by induction. 
Given that $ \bm{\delta}_0  =  \mathbf{z} = \mathbf{x}_0$ it follows that 
$ \mathbf{v}_0 = \mathbf{y}_0$ proving the base case. 
Now let $ s \geq 1$ and assume that \eqref{pure2out} is valid $ 0 \leq t < s$. 
From \eqref{equivalence_laws} and \eqref{model} it follows that  $\hat{\mathbf{x}}_s$ and as such  $\hat{\mathbf{y}}_s$
are affine functions of $ \mathbf{v}_0, \mathbf{v}_1, \hdots, \mathbf{v}_{s-1},$ in the sense that for some 
$$
\mathbf{l}_{t} :  \mathbb{O}^{\min{\{T+1,t+1\}}} \rightarrow \mathbb{R}^{n_y},~~~~ 
\mathbf{L}_j^t :  \mathbb{O}^{\min{\{T+1,t+1\}}} \rightarrow \mathbb{R}^{n_y \times n_y},
~~~~j \leq t, ~ t =0,1, \hdots, N-1,
$$
one has 
$$
\hat{\mathbf{y}_s} = \mathbf{l}_{s} + 
\mathbf{L}_0^{s-1}[\bm{\theta}_{[s-1,T]}] ~ \mathbf{v}_0 + 
\mathbf{L}_1^{s-1}[\bm{\theta}_{[s-1,T]}] ~ \mathbf{v}_1 + \hdots + \mathbf{L}_{s-1}^{s-1}[\bm{\theta}_{[s-1,T]}] ~ \mathbf{v}_{s-1}.
$$
Given that $\mathbf{y}_t = \hat{\mathbf{y}_t} + \mathbf{v}_t$ \eqref{pure2out}
follows and therefore the induction is completed proving (i). \\

The proof of (ii) is similar.   Fix a particular affine policy as in \eqref{control_pure_out}. It suffices to show that for
properly chosen maps 
\begin{equation}
\mathbf{q}_{t} :  \mathbb{O}^{\min{\{T+1,t+1\}}} \rightarrow \mathbb{R}^{n_y},~~~~ 
\mathbf{Q}_j^t :  \mathbb{O}^{\min{\{T+1,t+1\}}} \rightarrow \mathbb{R}^{n_y \times n_y}
~~~~j \leq t, ~ t =0,1, \hdots, N-1,
\end{equation}
one has 
\begin{equation}
\label{out2pure}
\mathbf{v}_t = \mathbf{q}_{t}[\bm{\theta}_{[t,T]}] + \mathbf{Q}_0^t[\bm{\theta}_{[t,T]}] ~ \mathbf{y}_0 + 
\mathbf{Q}_1^t[\bm{\theta}_{[t,T]}] ~ \mathbf{y}_1 + \hdots + \mathbf{Q}_t^t[\bm{\theta}_{[t,T]}] ~ \mathbf{y}_t, ~~ t =0,1, \hdots N-1 .
\end{equation}

The above relations together with the fixed affine law as in \eqref{control_pure_out} would imply that
for appropriately chosen parameters $ \{ g_{t}, G_j^t \}_{j \leq t,0 \leq t \leq N-1}$ 
the same state-control trajectory can be produced by
an affine policy that is a function of the outputs since 
\eqref{equivalence_laws} will hold. 

Again the proof of \eqref{out2pure} proceeds by induction. 
Given that $ \bm{\delta}_0  =  \mathbf{z} = \mathbf{x}_0$ it follows that
$ \mathbf{v}_0 = \mathbf{y}_0$ proving the base case. Now let $ s \geq 1$ and assume that \eqref{out2pure} is valid $ 0 \leq t < s$. 
From \eqref{equivalence_laws} and \eqref{model} it follows that  $\hat{\mathbf{x}}_s$ and as such  $\hat{\mathbf{y}}_s$
are affine functions of $ \mathbf{y}_0, \mathbf{y}_1, \hdots, \mathbf{y}_{s-1},$ in the sense that for some 
$$
\mathbf{l}_{t} :  \mathbb{O}^{\min{\{T+1,t+1\}}} \rightarrow \mathbb{R}^{n_y},~~~~ 
\mathbf{L}_j^t :  \mathbb{O}^{\min{\{T+1,t+1\}}} \rightarrow \mathbb{R}^{n_y \times n_y},
~~~~j \leq t, ~ t =0,1, \hdots, N-1,
$$
one has 
$$
\hat{\mathbf{y}_s} = \mathbf{l}_{s} + 
\mathbf{L}_0^{s-1}[\bm{\theta}_{[s-1,T]}] ~ \mathbf{y}_0 + 
\mathbf{L}_1^{s-1}[\bm{\theta}_{[s-1,T]}] ~ \mathbf{y}_1 + \hdots + \mathbf{L}_{s-1}^{s-1}[\bm{\theta}_{[s-1,T]}] ~ \mathbf{y}_{s-1}.
$$
Given that $\mathbf{v}_t =\mathbf{y}_t - \hat{\mathbf{y}_t}  $ \eqref{out2pure}
follows and therefore the induction is completed proving (ii).

\section{Numerical Illustration, Multiperiod Portfolio Selection }

In this section we apply our design methodology to the  multiperiod 
portfolio selection problem.
There are $n$ types of assets and $N$ investment periods. The market conditions are reflected in two states representing a recessive and
expansive period respectively.
We assume  that the market conditions fluctuate according to a Markov chain $\bm{\theta} = (\theta_0, \theta_1, \hdots, \theta_{N-1}) \in \mathbb{O}^N$,
where $ \mathbb{O} = \{\text{b},\text{g}\}$, ``$\text{ b} $'' stands for the recessive period  and ``$\text{g}$ ''stands for expansive period.  The statistical description of the Markov chain is given via 
$\pi$  and transition probability matrix $ P$.
The variables of interest are 
\begin{itemize}
	\item $ \mathbf{x}_t \in \mathbb{R}^n$, where
	$ [ \mathbf{x}_t]_i$ is the position (in dollars) at time instant $t$ in the $i$-th asset, $ [ \mathbf{x}_t]_i <0 $ reflects a short position,
	\item $ \mathbf{u}_t \in \mathbb{R}^n$, where $ [ \mathbf{u}_t]_i$ is the transaction amount at time instant $t$ in the $i$-th asset, $ [ \mathbf{u}_t]_i <0 $ reflects selling,
	\item $ \mathbf{r} : \mathbb{O} \rightarrow \mathbb{R}^n$ the vector valued
	map of actual asset returns,
	\item$ \bar{\mathbf{r}} : \mathbb{O} \rightarrow \mathbb{R}^n$ the vector valued map of baseline asset returns.
\end{itemize}
The decision maker knows $ \bar{\mathbf{r}} $ and at each instant of time $t$
acquires  $ \mathbf{x}_t, \theta_t$. 
The model includes also the vectors $ \bm{\alpha},  \bm{\gamma} \in \mathbb{R}^n_+$.
The vector $ \bm{\alpha}$ bounds the allowable transaction size at each instant $t$, 
\begin{equation}
\label{bound_transaction}
| [\mathbf{u}_t]_i | \leq [\bm{\alpha}]_i, ~~ i =1, 2, \hdots, n,  ~t= 0,1,  \hdots, N-1.
\end{equation}
The vector $ \bm{\gamma}$ reflects the partial knowledge of the decision maker with regards to 
the uncertain actual asset returns in the sense that 
\begin{equation}
\label{bound_returns}
| [ \bar{\mathbf{r}}(\theta)]_i -  [ \mathbf{r}(\theta)]_i  |  \leq [\bm{\gamma}]_i, ~  i = 1, 2, \hdots, n, ~\theta \in \mathbb{O}.
\end{equation}
The asset dynamics are given by 
\begin{equation}
\label{asset_dynamics}
[ \mathbf{x}_{t+1}]_i = ( 1 +  [ \mathbf{r}(\theta_t)]_i ) ~  ( [\mathbf{x}_{t}]_i  + [\mathbf{u}_t ]_i ), ~~~ i = 1, 2, \hdots, n, ~~~ t= 0,1,  \hdots, N-1.
\end{equation}
The goals are to generate a total income stream  $ \sum_{t=0}^{N-1} \langle \mathbf{1}_n, \mathbf{u}_t \rangle$ 
close to a desired amount $U_{\text{tar}}$ and
to keep at each instant $t$ the asset holdings $x_t$ close to  the desired allocation $ \mathbf{x}_{\text{tar}} \in \mathbb{R}^n$.
We assume that we start out with a balanced portfolio in the sense that $ \mathbf{x}_0 = \mathbf{x}_{\text{tar}}$. 
The dynamics above do not fall  in the system class \eqref{system}, as is, since the actual returns which enter in  the system matrices are uncertain. 
One can use the bounds $  \bm{\alpha},  \bm{\gamma}$
in conjunction with  \eqref{bound_transaction} and \eqref{bound_returns}
in order to construct an uncertainty set for the disturbance signal.
First let 
$$
M_i  =   \max_{\theta \in \mathbb{O}} (1 + [r(\theta)]_i), ~~~ i = 1, 2, \hdots, n.
$$
Consider an asset $i \in \{1, 2, \hdots, n \}$ at any time instant $t$ one has by virtue of
\eqref{bound_transaction} and \eqref{asset_dynamics}  that
\begin{equation}
\label{bound_state}
| [ \mathbf{x}_{t+1}]_i | \leq M_i^{t+1} ~  | [ \mathbf{x}_{\text{tar}}]_i | ~+  ~ \sum_{k=1}^{t+1}
M_i^k ~ [\bm{\alpha}]_i = M_i^{t+1} ~  | [ \mathbf{x}_{\text{tar}}]_i |  ~+  ~  
\frac{M_i^{t+2}-M_i}{M_i-1}~ [\bm{\alpha}]_i.
\end{equation}
We write \eqref{asset_dynamics} as
\begin{equation*}
[ \mathbf{x}_{t+1}]_i = ( 1 +  [\bar{ \mathbf{r}}(\theta_t)]_i ) ~  ( [\mathbf{x}_{t}]_i  + [\mathbf{u}_t ]_i ) ~+~ [\mathbf{d}_{t}],  ~~~ i = 1, 2, \hdots, n, ~~~ t= 0,1,  \hdots, N-1,
\end{equation*}
with the disturbance signal given by
\begin{eqnarray*}
	[ \mathbf{d}_t ]_i    &  =  & ( -[ \bar{\mathbf{r}}(\theta_t)]_i +
	[ \mathbf{r}(\theta_t)]_i ) ~  ( [\mathbf{x}_{t}]_i  + [\mathbf{u}_t ]_i ),~~~ t= 0,1,  \hdots, N-1.
\end{eqnarray*}
Given the bound on the magnitude of the state \eqref{bound_state} and in conjuction with \eqref{bound_returns}, it follows that
\begin{eqnarray*}
	[\mathbf{d}_{t}]_i^2 &~\leq~& [\bm{\gamma}]_i^2 ~ \bigg( M_i^{t} ~   | [ \mathbf{x}_{\text{tar}}]_i | ~+~  \frac{M_i^{t+1}-1}{M_i-1}  ~  [\bm{\alpha}]_i +   [\bm{\alpha}]_i  \bigg)^2
	~=~ [\mathbf{q}_{t}]_i, ~~~ t= 0,1,  \hdots, N-1.
\end{eqnarray*}
The above inequality offers the description of the allowable disturbance signals as intersection of $n  N$ 
ellipsoids centered around the origin. 
In particular 
\begin{equation}
\label{example_uncertainty}
\mathcal{D}^N ~=~ \{~ \zeta_d ~~ |~~  [\mathbf{d}_{t}]_i^2  ~ \frac{1}{[\mathbf{q}_{t}]_i} ~ \leq ~ 1, ~~ i = 1, 2, \hdots n, ~ t = 0,1, \hdots, N-1 ~\}.
\end{equation}
In this example the decision maker observes at each instant $t$ directly the  state of the system $ \mathbf{x}_t$
and therefore the initial condition does not enter the description of the uncertainty set. 
The asset dynamics can be written now in state space form as
\begin{eqnarray}
\label{example_system}
\nonumber
\mathbf{x}_0 & = & \mathbf{z} = \mathbf{x}_{\text{tar}}, \\
\nonumber
\mathbf{x}_{t+1} & = &  \mathbf{A}[\theta_t] ~ \mathbf{x}_t + \mathbf{B}[\theta_t] ~ \mathbf{u}_t + 
\mathbf{I}_n ~ \mathbf{d}_t,  \\
\mathbf{y}_t  & = &\mathbf{I}_n ~ \mathbf{x}_t + \mathbf{O}_n ~ \mathbf{d}_t, 
\end{eqnarray}
where 
$$
\mathbf{A}[\theta_t] = \mathbf{B}[\theta_t] = \AR{cccc}{   1 +  [ \bar{\mathbf{r}}(\theta_t)]_1  & 0& \hdots & 0\\ 
	0	&  1 +  [ \bar{\mathbf{r}}(\theta_t)]_2 & \ddots& \vdots\\
	\vdots & \ddots & \ddots & 0\\ 
	0& \hdots & 0&  1 +  [ \bar{\mathbf{r}}(\theta_t)]_n}, ~~t =0,1, \hdots N-1.
$$
Given the asset dynamics \eqref{example_system} and the uncertainty description for the disturbances 
\eqref{example_uncertainty} the specifications to be satisfied robustly, i.e. for all $\zeta_d \in  \mathcal{D}^N$ are
\begin{eqnarray}
\label{example_income_deviation}
\mathbb{E}[(U_{\text{tar}} -\sum_{t=0}^{N-1} \langle \mathbf{1}_n, \mathbf{u}_t \rangle)^2  ] 
& \leq &\rho, \\
\label{example_portfolio deviation}
\mathbb{E}[ \|\mathbf{x}_t-\mathbf{x}_{\text{tar}} \|^2  ] & \leq & \mu_t,~~~ t = 1, \hdots, N.
\end{eqnarray}
In \eqref{example_income_deviation} the parameter $ \rho > 0 $ controls how much 
the cummulative income stream deviates from the aspired level $ U_{\text{tar}}$.
In \eqref{example_portfolio deviation} the positive parameters 
$ \mu_t $, $t=1,2, \hdots, N$,  control how much 
the portfolio drifts away from the desired one $ \mathbf{x}_{\text{tar}}$.
For purposes of illustration let us take $n = 2$, $N  = 3$. We consider the situation
where there is a risk-free asset, say asset 1, whose return  is not
affected by the market conditions, while the second's asset return is varying.  The data of the problem are as follows:

\begin{itemize}
	\item Portfolio optimization, 2 assets , 3 stages, 2 market states
	\begin{eqnarray*}
		\bar{\mathbf{r}}(\text{b}) & =   &  \AR{c}{ 0.02 \\  0.01 } , ~~
		\bar{\mathbf{r}}(\text{g})  =     \AR{c}{ 0.02 \\  0.05 },  ~~  
		\bm{\gamma} = \AR{c}{ 0.001 \\ 0.005 },\\
		\mathbf{x}_{\text{tar}} & = & \AR{c}{ 100 \\ 100}, ~~ 
		\bm{\alpha} = \AR{c}{ 10 \\ 10}, \\
		\pi & = & \AR{c}{ 0.1 \\ 0.9 },   P  =  \AR{cc}{ 0.2 & 0.3 \\
			0.8 &  0.7}
	\end{eqnarray*}
	\item Constraints 
	\begin{eqnarray*}
		\mathbb{E}[(U_{\text{tar}} -\sum_{t=0}^{2} \langle \mathbf{1}_2, \mathbf{u}_t \rangle)^2  ] 
		& \leq & 0.3, \\
		\mathbb{E}[ \|\mathbf{x}_1-\mathbf{x}_{\text{tar}} \|^2  ] & \leq & 5, \\
		\mathbb{E}[ \|\mathbf{x}_2-\mathbf{x}_{\text{tar}} \|^2  ] & \leq & 10, \\
		\mathbb{E}[ \|\mathbf{x}_3-\mathbf{x}_{\text{tar}} \|^2  ] & \leq & 20.
	\end{eqnarray*}
\end{itemize}
We set the desired income to $ U_{\text{tar}} = -17.675$
computed via ``naive'' re-balancing procedure using the baseline returns.  In particular naive re-balancing  amounts to choosing
$$
[\mathbf{u}_t]_i =     - \frac{[\bar{\mathbf{r}}(\theta_t)]_i}{1 + [\bar{\mathbf{r}}(\theta_t)]_i },  ~~~ i = 1, 2, \hdots, n, ~~~ t= 0,1,  \hdots, N-1,
$$
so that $ \mathbf{x}_t =  \mathbf{x}_{\text{tar}}$, under the baseline 
returns.

We run the simulation in a MATLAB environment on 
a MacBook Pro with a 2.3 GHz Intel Core i5 Processor, utilizing
8 GB of RAM memory. The semidefinite program was formulated 
and solved with CVX  (Grant and Boyd 2014, Grant et al. 2006)
using the SDPT3, Toh et al. (1999), solver. We set the switching memory to $ T = 2 $ and  consider  100 realizations of the uncertain data.

\begin{itemize}
	\item State trajectory
	\begin{figure}[!h]
		\centering
		\includegraphics[width=80mm]{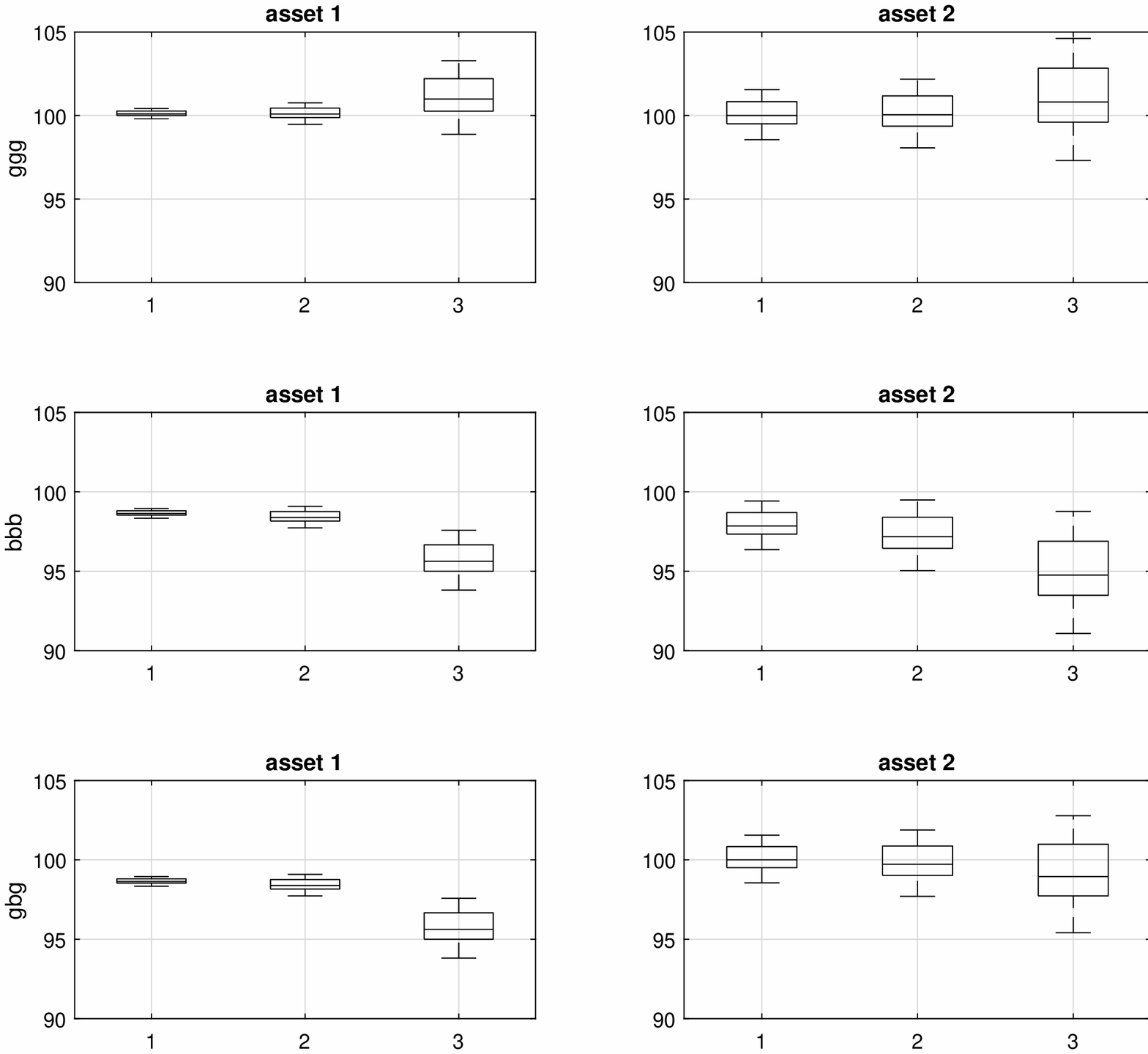}
	\end{figure} 
\end{itemize}
The above figure depicts boxplots of the state trajectory for 3 different realizations of the regime-switching signal. As  time progresses the range of the realized state variables becomes progressively larger given the compounding effect of the uncertainty.
The achieved performance tends to be better for the more
probable sequences depicted.

The 8 possible scenarios, corresponding to the realizations of the regime-switching signal are categorized in matrix form as
$$
\AR{cccccccc}{ \text{b} & \text{b} & \text{b} & \text{b} & \text{g} & \text{g} & \text{g} & \text{g}  \\ 
	\text{b} & \text{b} & \text{g} & \text{g} & \text{b} & \text{b} & \text{g} & \text{g}  \\ 
	\text{b} & \text{g} & \text{b} & \text{g} & \text{b} & \text{g} & \text{b} & \text{g}  }
$$
The next figure depicts  the average over the 100 samples of 
$$| U_{\text{tar}} - \AR{cc}{1 & 1} ~ (\sum_{t=0}^2 \mathbf{u}_t) |^2$$
for every possible realization of the underlying regime-switching signal. 
\begin{figure}[!h]
	\centering
	\includegraphics[width=60mm]{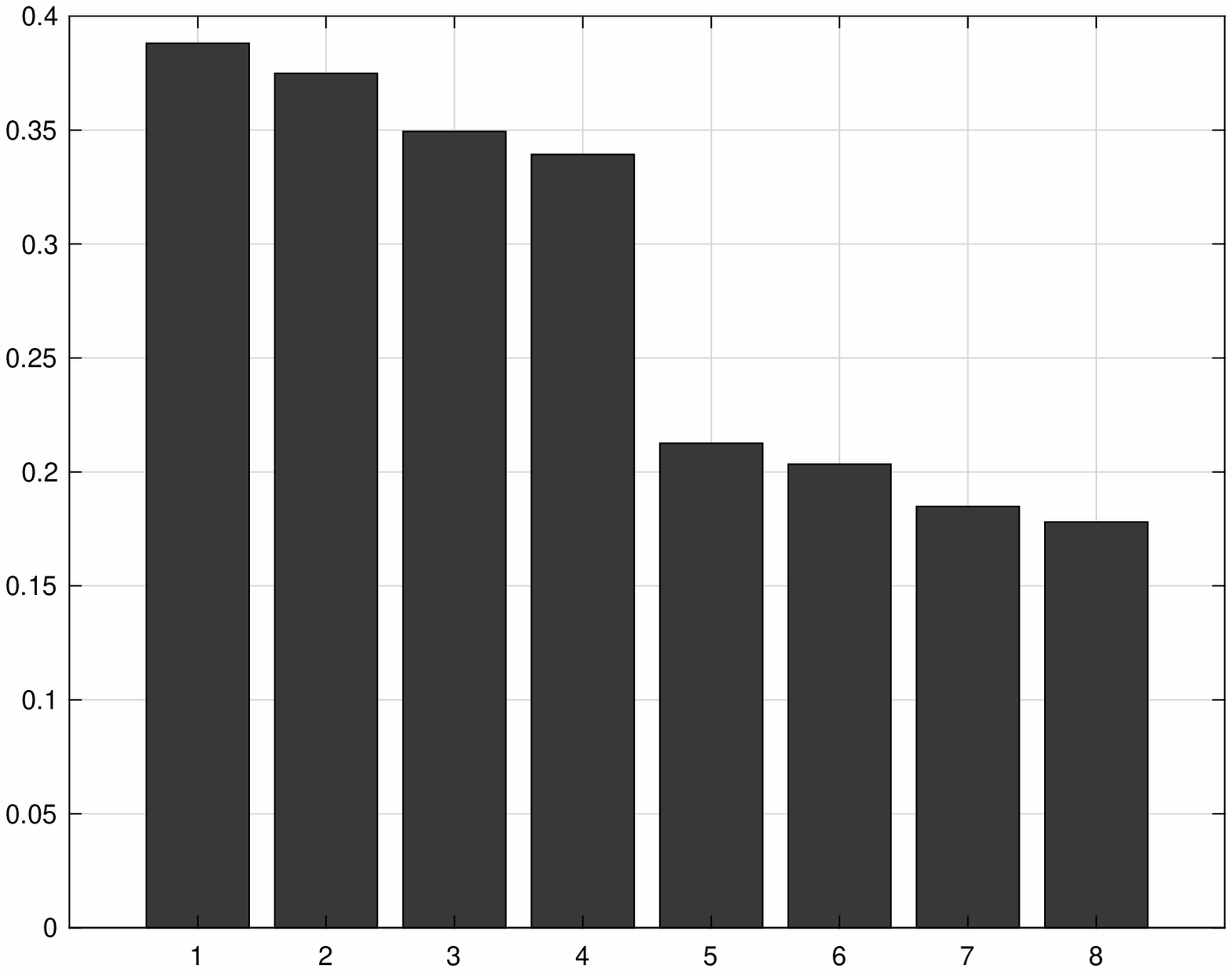}
\end{figure} 
If we start out on a recession  as far as the initial conditions are concerned, first four columns, then the quadratic constraint on the income stream is violated. The probability of this event though is  $ 0.1$
and the quadratic constraint on the deviation from the desired income stream is satisfied in expectation, as required by our method.

\section{Summary and Outlook}

In this paper we provide a computationally tractable procedure for designing affine policies 
for a constrained  robust multistage decision problem as it pertains to linear stochastic systems subject to Markovian-switching.  The main contribution is the fact that we were able to  point out an explicit semidefinite feasibility problem whose solution yields an affine purified-output-based policy meeting the design specifications robustly w.r.t. the uncertain data. Moreover for fixed memory of the switching signal, the sizes of the feasibility problem and the complexity of solving it are polynomial in the sizes of the data and the finite horizon under consideration.

In terms of future work, we will consider the case when 
the switching signal is not observable by the decision maker,
and when the statistics of the switching process are uncertain themselves.


%

\section{Appendices}

\subsection*{A1}

This section contains the block matrices that connect the initial state, noise and disturbance trajectories to the state, control 
and purified output trajectories.
For $t, \tau \in \mathbb{Z}_{[0,N]}$, $ \bm{\theta} \in \mathbb{O}^N$ denote the  state transition matrix by 
$$
\bm{\Gamma}[t,\tau] = \begin{cases}
 I     ~~&\text{if} ~~ t = \tau, \\
 A_{t-1}(\bm{\theta}_{t-1}) \hdots A_{\tau}(\bm{\theta}_{\tau}) ~~&\text{if} ~~ t > \tau.
\end{cases}
$$
$\underline{\mathbf{A}} \in \mathbb{R}^{n_x  N \times n_x},$ 	
	is a lower triangular block matrix, consisting of $N$ blocks of size $ n_x \times n_x$.
	\begin{equation*}
	\label{z2x}
	\underline{\mathbf{A}} = \AR{c} { \bm{\Gamma}[1,0]  \\ \bm{\Gamma}[2,0] \\ \vdots \\ \bm{\Gamma}[N,0]}. 
	\end{equation*}
$\underline{\mathbf{B}} \in \mathbb{R}^{n_x  N \times n_u N},$ 	
is a  block matrix, consisting of $N^2$ blocks of size $ n_x \times n_u$.
The $(k,j)$-th block is
$$
[\underline{\mathbf{B}}]_{k,j} = 	\begin{cases}
\bm{\Gamma}[k,j+1] ~ B_j[\bm{\theta}_j], ~& k \geq j, \\
~~~~~0_{n_x \times n_u}, ~& k < j,~~ k,j \in \mathbb{Z}_{[0,N-1]}.
\end{cases}
$$	
$\underline{\mathbf{B}}^{(d)}  \in \mathbb{R}^{n_x  N \times n_d N},$ 	
is a lower triangular block matrix, consisting of $N^2$ blocks of size $ n_x \times n_d$.
The $(k,j)$-th block is
$$
[\underline{\mathbf{B}}^{(d)}]_{k,j} = 	\begin{cases}
\bm{\Gamma}[k,j+1] ~ B_j^{(d)}[\bm{\theta}_j], ~& k \geq j, \\
~~~~~0_{n_x \times n_d}, ~& k < j,~~ k,j \in \mathbb{Z}_{[0,N-1]}.
\end{cases}
$$	
$\underline{\mathbf{B}}^{(s)}  \in \mathbb{R}^{n_x  N \times n_e N},$ 	
is a lower triangular block matrix, consisting of $N^2$ blocks of size $ n_x \times n_e$.
The $(k,j)$-th block is
$$
[\underline{\mathbf{B}}^{(s)}]_{k,j} = 	\begin{cases}
\bm{\Gamma}[k,j+1] ~ B_j^{(s)}[\bm{\theta}_j], ~& k \geq j, \\
~~~~~0_{n_x \times n_e}, ~& k < j,~~ k,j \in \mathbb{Z}_{[0,N-1]}.
\end{cases}
$$	
$  \underline{\mathbf{C}}  \in \mathbb{R}^{n_y N \times n_x},$ 
	is a block matrix, consisting of $N$ blocks of size $ n_y \times n_x$.
	$$
	\underline{\mathbf{C}} = \AR{c} { C_0[\bm{\theta}_0]  \\ C_1[\bm{\theta}_1]  \bm{\Gamma}[1,0]  \\ \vdots \\ C_{N-1}[\bm{\theta}_{N-1}] ~ \bm{\Gamma}[N-1,0] }.
$$
$\underline{\mathbf{D}}^{(d)}  \in \mathbb{R}^{n_y  N \times n_d N},$ 	
is a lower triangular block matrix, consisting of $N^2$ blocks of size $ n_y \times n_d$.
The $(k,j)$-th block is
$$
[\underline{\mathbf{D}}^{(d)}]_{k,j} = 	\begin{cases}
D_j^{(d)}[\bm{\theta}_j]~& k = j, \\
C_k[\bm{\theta}_k] ~  \bm{\Gamma}[k,j+1]~   B_j^{(d)}[\bm{\theta}_j], ~& k > j, \\
0_{n_y \times n_d}, ~& k < j,~ 
\end{cases}
$$	
$k,j \in \mathbb{Z}_{[0,N-1]}.$\\
\noindent 
$\underline{\mathbf{D}}^{(s)}  \in \mathbb{R}^{n_y  N \times n_e N},$ 	
is a lower triangular block matrix, consisting of $N^2$ blocks of size $ n_y \times n_e$.
The $(k,j)$-th block is
$$
[\underline{\mathbf{D}}^{(s)}]_{k,j} = 	\begin{cases}
D_j^{(s)}[\bm{\theta}_j]~& k = j, \\
C_k[\bm{\theta}_k] ~  \bm{\Gamma}[k,j+1]~   B_j^{(s)}[\bm{\theta}_j], ~& k > j, \\
0_{n_y \times n_e}, ~& k < j,~ 
\end{cases}
$$	
$k,j \in \mathbb{Z}_{[0,N-1]}.$\\
\noindent The block vector $ \underline{\mathbf{h}} \in \mathbb{R}^{n_u  N \times 1}$ consist of $N$ vectors of size $ n_u$.
$$
\underline{\mathbf{h}} = \AR{c}{h_0[\bm{\theta}_{[0,T]}] \\  h_1[\bm{\theta}_{[1,T]}] \\ \vdots \\ h_{N-1}[\bm{\theta}_{[N-1,T]}]}.
$$
The  block lower triangular matrix 
$\underline{\mathbf{H}} \in \mathbb{R}^{n_u  N \times n_y  N} $ consist of $N^2$ matrices of size $ n_u \times n_y$.
The $(k,j)$-th block is
$$
[\underline{\mathbf{H}}]_{k,j} = 	\begin{cases}
H_j^k[\bm{\theta}_{[k,T]}], ~& k \geq j, \\
~~~~~0_{n_u \times n_y}, ~& k < j,~~ k,j \in \mathbb{Z}_{[0,N-1]}.
\end{cases}
$$

\subsection*{A2}

This section contains the approximate version of the inhomogeneous 
S-lemma, see also \cite{bental6}.   An improved tightness factor 
is adapted from \cite{jud_17}.
Consider the following quadratically constrained optimization problem:
$$
\text{Opt} = \max_{\mathbf{x}} \{ ~ f(\mathbf{x}) = \mathbf{x}^\T ~ \mathbf{A} ~\mathbf{x} ~+~ 2 ~ \langle \mathbf{b}, \mathbf{x} \rangle~:~  \mathbf{x}^\T ~ \mathbf{A}_i ~\mathbf{x} ~ \leq ~ c_i, ~ i = 1, \hdots, m ~ \},
$$
where $c_i > 0$, $\mathbf{A}_i \succeq 0 $, $i = 1, \hdots, m$. The matrix $\mathbf{A}$ is symmetric and $ \sum_{i=1}^m \mathbf{A}_i \succ \mathbf{O}$.
Let SDP be the optimal value in the semidefinite relaxation of this problem:
$$
\text{SDP} = \min_{\omega, \bm{\lambda}} \{ ~ \omega ~:~ 
\AR{ccc}{\omega - \sum_{i=1}^m c_i ~ [\bm{\lambda}]_i &  & - \mathbf{b}^\T \\ &  &  \\ 
	- \mathbf{b} &  & \sum_{i=1}^m  [\bm{\lambda}]_i ~ \mathbf{A}_i ~ - ~ \mathbf{A}} \succeq \mathbf{O}, ~~~\bm{\lambda} \geq \mathbf{0}_m. ~\}
$$
Then 
\begin{equation}
\label{sandwich}
\text{Opt} ~ \leq ~ \text{SDP} ~\leq ~ \Theta ~ \text{Opt}
\end{equation}
where $ \Theta = 1 $ in the case of $m= 1$ and 
\begin{equation}
\label{factor}
\Theta = 2~  \ln \bigg(6 \sum_{i=1}^m \rank(\mathbf{A}_i) \bigg)
\end{equation}
in the case of $m > 1$. 
The factor $\Theta $ was improved in \cite{jud_17}, proposition 3.3
to 
$$
\Theta = 2~  \ln(m) +  2 \sqrt{\ln(m) } +1
$$
for the case of a homogeneous quadratic form and consequently to 
\begin{equation}
\label{factor_improved}
\Theta = 2~  \ln(m+1) +  2 \sqrt{\ln(m+1) } +1
\end{equation}
for the case under consideration.

\subsection*{A3}

Proof of Proposition~\ref{lemma1}.
The proof will utilize repeatedly the law of iterated expectations and the Markov property. 
First observe that the matrix valued functions defined in \eqref{recursive_linear_1} to
\eqref{recursive_linear_4},
can be written as sequential conditional expectations as follows:
\begin{eqnarray*}
	\nonumber
	\mathbf{g}_{N-2}[\theta_{N-2}] & = & \mathbb{E}[\mathbf{f}_{N-1}[\theta_{N-1}] ~|~\theta_{N-2}],  \\
	\nonumber
	\mathbf{g}_{s}[\theta_{s}] & = &   \mathbb{E}[\mathbf{g}_{s+1}[\theta_{s+1}]~ \mathbf{f}_{s+1}[\theta_{s+1}] ~|~\theta_s], ~~~
	s = N-3, N-4, \hdots, \tau+1, \tau,  \\
	\mathbf{g}_{\tau-T-1}[\theta_{\tau-T-1}] & = &
	\mathbb{E}[\mathbf{g}_{\tau}[\theta_{\tau}] ~ \mathbf{f}_{\tau}[\bm{\theta}_{[\tau,T]}] ~ 
	\mathbf{f}_{\tau-1}[\theta_{\tau-1}]  ~ \hdots ~ \mathbf{f}_{\tau-T}[\theta_{\tau-T}]~|~\theta_{\tau-T-1}], \\
	\mathbf{g}_{s}[\theta_{s}] & = &  \mathbb{E}[\mathbf{g}_{s+1}[\theta_{s+1}]~ \mathbf{f}_{s+1}[\theta_{s+1}] ~|~\theta_s], ~~~
	s = \tau - T -2 ,  \tau - T - 3, \hdots, 0.  
\end{eqnarray*}
Using the above expresssions and by repeatedly applying the law of iterated expectations, gives
\begin{eqnarray*}
	\mathbf{T}_l & = & \mathbb{E}\bigg[~   \mathbb{E}[\mathbf{f}_{N-1}[\theta_{N-1}] ~|~\theta_{N-2}]~
	\mathbf{F}_{N-2, \tau+1}[\bm{\theta}_{[N-2,N-\tau-3]}]~
	\mathbf{f}_{\tau}[\bm{\theta}_{[\tau,T]}] ~ 
	\mathbf{F}_{\tau-1, 0}[\bm{\theta}_{[\tau-1,\tau-1]}]
	\bigg] \\
	& = & 
	\mathbb{E}\bigg[\mathbf{g}_{N-2}[\theta_{N-2}]~  
	\mathbf{F}_{N-2, \tau+1}[\bm{\theta}_{[N-2,N-\tau-3]}]~
	\mathbf{f}_{\tau}[\bm{\theta}_{[\tau,T]}] ~ 
	\mathbf{F}_{\tau-1, 0}[\bm{\theta}_{[\tau-1,\tau-1]}] \bigg] \\
	& = & 
	\mathbb{E}\bigg[\mathbb{E}[\mathbf{g}_{N-2}[\theta_{N-2}] ~\mathbf{f}_{N-2}[\theta_{N-2}] ~ |~\theta_{N-3}]~   
	\mathbf{F}_{N-3, \tau+1}[\bm{\theta}_{[N-3,N-\tau-4]}]~
	\mathbf{f}_{\tau}[\bm{\theta}_{[\tau,T]}] ~ 
	\mathbf{F}_{\tau-1, 0}[\bm{\theta}_{[\tau-1,\tau-1]}] \bigg] \\
	&=&\mathbb{E}\bigg[\mathbf{g}_{N-3}[\theta_{N-3}]~  
	\mathbf{F}_{N-3, \tau+1}[\bm{\theta}_{[N-3,N-\tau-4]}]~
	\mathbf{f}_{\tau}[\bm{\theta}_{[\tau,T]}] ~ 
	\mathbf{F}_{\tau-1, 0}[\bm{\theta}_{[\tau-1,\tau-1]}]\bigg] \\
	& \vdots & \\
	& = & \mathbb{E}\bigg[\mathbf{g}_{\tau}[\theta_{\tau}]~  
	\mathbf{f}_{\tau}[\bm{\theta}_{[\tau,T]}] ~ 
	\mathbf{F}_{\tau-1, 0}[\bm{\theta}_{[\tau-1,\tau-1]}]\bigg] \\
	& = & \mathbb{E}\bigg[\mathbb{E}[\mathbf{g}_{\tau}[\theta_{\tau}] ~
	\mathbf{f}_{\tau}[\bm{\theta}_{[\tau,T]}] ~ 
	\mathbf{F}_{\tau-1, \tau-T}[\bm{\theta}_{[\tau-1,T-1]}]  ~|~\theta_{\tau-T-1}]~   
	\mathbf{F}_{\tau-T-1, 0}[\bm{\theta}_{[\tau-T-1,\tau-T-1]}] \bigg] \\
	&=& \mathbb{E}\bigg[\mathbf{g}_{\tau-T-1}[\theta_{\tau-T-1}]~  
	\mathbf{F}_{\tau-T-1, 0}[\bm{\theta}_{[\tau-T-1,\tau-T-1]}] \bigg] \\
	& \vdots & \\
	& = &  \mathbb{E}\bigg[\mathbf{g}_{0}[\theta_{0}]~  
	\mathbf{f}_0[\theta_0]\bigg] 
	= \sum_{\theta_0 \in \mathbb{O}}	\Pr[ \theta_0 ] ~ \mathbf{g}_0[\theta_0] ~\mathbf{f}_0[\theta_0].
\end{eqnarray*}

\vspace{0.1in}

\subsection*{A4}

This section contains the proof of Proposition~\ref{lemma2}
as well as  the terms appearing 
in $ \mathcal{V}_j[ \chi] $.

\begin{eqnarray*}
	\mathcal{V}_{(I,I)} & = & \mathbb{E}\bigg[\underline{\mathbf{h}}^\T ~ \underline{\mathbf{B}}^\T ~ \mathcal{A}_{xx,j}~ 
	\underline{\mathbf{B}}~ \underline{\mathbf{h}}\bigg] ~+~ 2 ~ \mathbb{E}\bigg[ \underline{\mathbf{h}}^\T ~ \underline{\mathbf{B}}^\T ~ 
	\mathcal{A}_{xu,j}~  \underline{\mathbf{h}}\bigg] 
	~ + ~ \mathbb{E}\bigg[ \underline{\mathbf{h}}^\T ~ \mathcal{A}_{uu,j} ~  \underline{\mathbf{h}} \bigg]\\
	\mathcal{V}_{(I,II)}  & = & \mathbb{E}\bigg[\underline{\mathbf{h}}^\T ~ \underline{\mathbf{B}}^\T ~ \mathcal{A}_{xx,j}~ 
	(	\underline{\mathbf{B}}~ \underline{\mathbf{H}}~ \underline{\mathbf{C}}~ + ~ \underline{\mathbf{A}} )\bigg] ~+~
	\mathbb{E}\bigg[\underline{\mathbf{h}}^\T  ~ \mathcal{A}_{ux,j} ~ 
	(	\underline{\mathbf{B}}~ \underline{\mathbf{H}}~ \underline{\mathbf{C}}~ + ~ \underline{\mathbf{A}} ) \bigg] 
	~+ ~  \mathbb{E}\bigg[\underline{\mathbf{h}}^\T ~\underline{\mathbf{B}}^\T~ \mathcal{A}_{xu,j} ~ \underline{\mathbf{H}}~ \underline{\mathbf{C}}\bigg] \\ ~&+&~
	\mathbb{E}\bigg[ \underline{\mathbf{h}}^\T ~ \mathcal{A}_{uu,j}~\underline{\mathbf{H}}~ \underline{\mathbf{C}}   \bigg]	\\
	\mathcal{V}_{(I,III)}  & = & \mathbb{E}\bigg[\underline{\mathbf{h}}^\T ~ \underline{\mathbf{B}}^\T ~ \mathcal{A}_{xx,j}~ 
	(	\underline{\mathbf{B}}~ \underline{\mathbf{H}}~ \underline{\mathbf{D}}~ + ~ \underline{\mathbf{R}} )\bigg] ~+~ \mathbb{E}\bigg[
	\underline{\mathbf{h}}^\T  ~ \mathcal{A}_{ux,j} ~ 
	(	\underline{\mathbf{B}}~ \underline{\mathbf{H}}~ \underline{\mathbf{D}}~ + ~ \underline{\mathbf{R}} ) \bigg] 
	~+ ~ \mathbb{E}\bigg[ \underline{\mathbf{h}}^\T ~\underline{\mathbf{B}}^\T~ \mathcal{A}_{xu,j} ~ \underline{\mathbf{H}}~ \underline{\mathbf{D}}\bigg]\\ ~&+&~
	\mathbb{E}\bigg[\underline{\mathbf{h}}^\T ~ \mathcal{A}_{uu,j}~\underline{\mathbf{H}}~ \underline{\mathbf{D}}   \bigg]	\\
	\mathcal{V}_{(II,I)}  & = & 	\mathcal{V}_{(I,II)} \\
	\mathcal{V}_{(II,II)}  & = &   \mathbb{E}\bigg[
	(	\underline{\mathbf{C}}^\T~ \underline{\mathbf{H}}^\T~ \underline{\mathbf{B}}^\T~ + ~ \underline{\mathbf{A}}^\T ) ~ \mathcal{A}_{xx,j}~ (	\underline{\mathbf{B}}~ \underline{\mathbf{H}}~ \underline{\mathbf{C}}~ + ~ \underline{\mathbf{A}} ) \bigg]
	~+~ 2 ~ 	\mathbb{E}\bigg[(	\underline{\mathbf{C}}^\T~ \underline{\mathbf{H}}^\T~ \underline{\mathbf{B}}^\T~ + ~ \underline{\mathbf{A}}^\T ) ~ 
	\mathcal{A}_{xu,j}~\underline{\mathbf{H}}~ \underline{\mathbf{C}}\bigg] \\
	~  &+& ~ 	\mathbb{E}\bigg[\underline{\mathbf{C}}^\T~ \underline{\mathbf{H}}^\T ~ \mathcal{A}_{uu,j} ~  
	\underline{\mathbf{H}}~ \underline{\mathbf{C}} \bigg] \\
	\mathcal{V}_{(II,III)}  & = &  
	\mathbb{E}\bigg[(	\underline{\mathbf{C}}^\T~ \underline{\mathbf{H}}^\T~ \underline{\mathbf{B}}^\T~ + ~ \underline{\mathbf{A}}^\T )  ~ \mathcal{A}_{xx,j}~ 
	(	\underline{\mathbf{B}}~ \underline{\mathbf{H}}~ \underline{\mathbf{D}}~ + ~ \underline{\mathbf{R}} )\bigg] ~+~
	\mathbb{E}\bigg[\underline{\mathbf{C}}^\T~ \underline{\mathbf{H}}^\T ~ \mathcal{A}_{ux,j} ~ 
	(	\underline{\mathbf{B}}~ \underline{\mathbf{H}}~ \underline{\mathbf{D}}~ + ~ \underline{\mathbf{R}} ) \bigg]\\
	~&+& ~ \mathbb{E}\bigg[ (	\underline{\mathbf{C}}^\T~ \underline{\mathbf{H}}^\T~ \underline{\mathbf{B}}^\T~ + ~ \underline{\mathbf{A}}^\T ) ~ \mathcal{A}_{xu,j} ~ \underline{\mathbf{H}}~ \underline{\mathbf{D}}\bigg] ~ +~
	\mathbb{E}\bigg[\underline{\mathbf{C}}^\T~ \underline{\mathbf{H}}^\T ~ \mathcal{A}_{uu,j}~\underline{\mathbf{H}}~ \underline{\mathbf{D}}   \bigg]	\\
	\mathcal{V}_{(III,I)}  & = & 	\mathcal{V}_{(I,III)}  \\
	\mathcal{V}_{(III,II)}  & = & 	\mathcal{V}_{(II,III)}  \\
	\mathcal{V}_{(III,III)}  & = &   \mathbb{E}\bigg[
	(	\underline{\mathbf{D}}^\T~ \underline{\mathbf{H}}^\T~ \underline{\mathbf{B}}^\T~ + ~ \underline{\mathbf{R}}^\T ) ~ \mathcal{A}_{xx,j}~ (	\underline{\mathbf{B}}~ \underline{\mathbf{H}}~ \underline{\mathbf{D}}~ + ~ \underline{\mathbf{R}} ) \bigg]
	~+~ 2 ~ 	\mathbb{E}\bigg[(	\underline{\mathbf{D}}^\T~ \underline{\mathbf{H}}^\T~ \underline{\mathbf{B}}^\T~ + ~ \underline{\mathbf{R}}^\T ) ~ 
	\mathcal{A}_{xu,j}~\underline{\mathbf{H}}~ \underline{\mathbf{D}}\bigg] \\
	~ & + &~ 	\mathbb{E}\bigg[\underline{\mathbf{D}}^\T~ \underline{\mathbf{H}}^\T ~ \mathcal{A}_{uu,j} ~  
	\underline{\mathbf{H}}~ \underline{\mathbf{D}} \bigg]	  		 
\end{eqnarray*}

Proof of Proposition~\ref{lemma2}.	
The proof will utilize repeatedly the law of iterated expectations and the Markov property. 
First observe that the matrix valued functions defined in \eqref{recursive_quadratic}
can be written as sequential conditional expectations as follows:
\begin{eqnarray*}
	\nonumber
	\mathbf{g}_{N-2}[\theta_{N-2}] & = & \mathbb{E}[\mathbf{q}_{N-1}[\theta_{N-1}]^\T~ \mathbf{S} ~ \mathbf{f}_{N-1}[\theta_{N-1}] ~|~\theta_{N-2}],  \\
	\nonumber
	\mathbf{g}_{s}[\theta_{s}] & = &   \mathbb{E}[\mathbf{q}_{s+1}[\theta_{s+1}]^\T ~ \mathbf{g}_{s+1}[\theta_{s+1}]~ \mathbf{f}_{s+1}[\theta_{s+1}] ~|~\theta_s], ~~~
	s = N-3, N-4, \hdots, \tau+1, \tau,  \\
	\mathbf{g}_{\tau-T-1}[\theta_{\tau-T-1}] & = &
	\mathbb{E}[\bigg( \mathbf{q}_{\tau}[\bm{\theta}_{[\tau,T]}] ~ 
	\mathbf{Q}_{\tau-1, \tau-T}[\bm{\theta}_{[\tau-1,T-1]}] \bigg)^\T ~ \mathbf{g}_{\tau}[\theta_{\tau}] \\
	&& 
	\bigg( \mathbf{f}_{\tau}[\bm{\theta}_{[\tau,T]}] ~ 
	\mathbf{F}_{\tau-1, \tau-T}[\bm{\theta}_{[\tau-1,T-1]}] \bigg)~|~\theta_{\tau-T-1}], \\
	\mathbf{g}_{s}[\theta_{s}] & = & \mathbb{E}[\mathbf{q}_{s+1}[\theta_{s+1}]^\T ~ \mathbf{g}_{s+1}[\theta_{s+1}]~ \mathbf{f}_{s+1}[\theta_{s+1}] ~|~\theta_s], ~~~
	s = \tau - T -2 ,  \tau - T - 3, \hdots, 0.  
\end{eqnarray*}
Using the above expresssions and revisiting \eqref{typical_quadratic_term} gives
\begin{eqnarray*}
	\mathbf{T}_q & = & \mathbb{E}\bigg[\bigg( \mathbf{Q}_{N-2, \tau+1}[ \bm{\theta}_{[N-2,N-\tau-3]}  ] ~ 
	\mathbf{q}_{\tau}[\bm{\theta}_{[\tau,T]}] ~ 
	\mathbf{Q}_{\tau-1,0}[ \bm{\theta}_{[\tau-1,\tau-1]} ]
	\bigg)^\T~ 
	\mathbb{E}[\mathbf{q}_{N-1}[\theta_{N-1}]^\T~ \mathbf{S} ~ \mathbf{f}_{N-1}[\theta_{N-1}] ~|~\theta_{N-2}] ~   \\
	\nonumber
	&&
	\bigg( \mathbf{F}_{N-2,\tau+1}[ \bm{\theta}_{[N-2,N-\tau-3]} ] ~ 
	\mathbf{f}_{\tau}[\bm{\theta}_{[\tau,T]}] ~ 
	\mathbf{F}_{\tau-1,0}[\bm{\theta}_{[\tau-1,\tau-1]}]
	\bigg)\bigg] \\
	& = & 
	\mathbb{E}\bigg[\bigg( \mathbf{Q}_{N-2,\tau+1}[\bm{\theta}_{[N-2,N-\tau-3]}] ~ 
	\mathbf{q}_{\tau}[\bm{\theta}_{[\tau,T]}] ~ 
	\mathbf{Q}_{\tau-1,0}[\bm{\theta}_{[\tau-1,\tau-1]}]
	\bigg)^\T~ 
	\mathbf{g}_{N-2}[\theta_{N-2}] ~   \\
	\nonumber
	&&
	\bigg( \mathbf{F}_{N-2,\tau+1}[\bm{\theta}_{[N-2,N-\tau-3]}] ~ 
	\mathbf{f}_{\tau}[\bm{\theta}_{[\tau,T]}] ~ 
	\mathbf{F}_{\tau-1,0}[\bm{\theta}_{[\tau-1,\tau-1]}]
	\bigg)\bigg] \\
	\nonumber
	&=&
	\mathbb{E}\bigg[\bigg( \mathbf{Q}_{N-3,\tau+1}[\bm{\theta}_{[N-3,N-\tau-4]}] ~ 
	\mathbf{q}_{\tau}[\bm{\theta}_{[\tau,T]}] ~ 
	\mathbf{Q}_{\tau-1,0}[\bm{\theta}_{[\tau-1,\tau-1]}]
	\bigg)^\T~ 
	\mathbb{E}[\mathbf{q}_{N-2}[\theta_{N-2}]^\T~ \mathbf{g}_{N-2}[\theta_{N-2}] ~ \mathbf{f}_{N-2}[\theta_{N-2}] ~|~\theta_{N-3}] ~   \\
	\nonumber
	&&
	\bigg( \mathbf{F}_{N-3,\tau+1}[\bm{\theta}_{[N-3,N-\tau-4]}] ~ 
	\mathbf{f}_{\tau}[\bm{\theta}_{[\tau,T]}] ~ 
	\mathbf{F}_{\tau-1,0}[\bm{\theta}_{[\tau-1,\tau-1]}]
	\bigg)\bigg] \\
	\nonumber
	&=&   
	\mathbb{E}\bigg[\bigg( \mathbf{Q}_{N-3,\tau+1}[\bm{\theta}_{[N-3,N-\tau-4]}] ~ 
	\mathbf{q}_{\tau}[\bm{\theta}_{[\tau,T]}] ~ 
	\mathbf{Q}_{\tau-1,0}[\bm{\theta}_{[\tau-1,\tau-1]}]
	\bigg)^\T~ 
	\mathbf{g}_{N-3}[\theta_{N-3}]  ~   \\
	\nonumber
	&&
	\bigg( \mathbf{F}_{N-3,\tau+1}[\bm{\theta}_{[N-3,N-\tau-4]}] ~ 
	\mathbf{f}_{\tau}[\bm{\theta}_{[\tau,T]}] ~ 
	\mathbf{F}_{\tau-1,0}[\bm{\theta}_{[\tau-1,\tau-1]}]
	\bigg)\bigg] \\
	\nonumber 
	& \vdots & \\
	\nonumber
	&=&
	\mathbb{E}\bigg[\bigg( 
	\mathbf{q}_{\tau}[\bm{\theta}_{[\tau,T]}] ~ 
	\mathbf{Q}_{\tau-1,0}[\bm{\theta}_{[\tau-1,\tau-1]}]
	\bigg)^\T~ 
	\mathbf{g}_{\tau}[\theta_{\tau}]  ~   
	\bigg(
	\mathbf{f}_{\tau}[\bm{\theta}_{[\tau,T]}] ~ 
	\mathbf{F}_{\tau-1,0}[\bm{\theta}_{[\tau-1,\tau-1]}]
	\bigg)\bigg] \\
	\nonumber 	
	&=&\mathbb{E}\bigg[\bigg(  \mathbf{Q}_{\tau-T-1,0}[\bm{\theta}_{[\tau-T-1,\tau-T-1]}]
	\bigg)^\T \\
	\nonumber
	&& ~ \mathbb{E}[ \bigg( 
	\mathbf{q}_{\tau}[\bm{\theta}_{[\tau,T]}] ~ 
	\mathbf{Q}_{\tau-1,\tau-T}[\bm{\theta}_{[\tau-1,T-1]}]
	\bigg)^\T \mathbf{g}_{\tau}[\theta_{\tau}] ~ 	\mathbf{f}_{\tau}[\bm{\theta}_{[\tau,T]}] ~ 
	\mathbf{F}_{\tau-1,\tau-T}[\bm{\theta}_{[\tau-1,T-1]}] ~|~\theta_{\tau-T-1}] ~ \\
	\nonumber && 
	\bigg(  \mathbf{F}_{\tau-T-1,0}[\bm{\theta}_{[\tau-T-1,\tau-T-1]}]
	\bigg) \bigg] \\
	\nonumber
	& = & \mathbb{E}\bigg[\bigg(  \mathbf{Q}_{\tau-T-1,0}[\bm{\theta}_{[\tau-T-1,\tau-T-1]}]
	\bigg)^\T ~ \mathbf{g}_{\tau-T-1}[\theta_{\tau-T-1}] ~ 	\bigg(  \mathbf{F}_{\tau-T-1,0}[\bm{\theta}_{[\tau-T-1,\tau-T-1]}]
	\bigg) \bigg] \\
	& \vdots & \\
	& = &  \mathbb{E}\bigg[\mathbf{q}_0[\theta_0]^\T~  \mathbf{g}_{0}[\theta_{0}]~  \mathbf{f}_0[\theta_0]\bigg] 
	=  \sum_{\theta_0 \in \mathbb{O}}	\Pr[ \theta_0 ] ~ \mathbf{q}_0[\theta_0]^\T ~ \mathbf{g}_0[\theta_0] ~\mathbf{f}_0[\theta_0].
\end{eqnarray*}

\vspace{0.1in}

\section*{Acknowledgment}

The authors would like to thank Arkadi Nemirovski for his critical comments and suggestions. They also gratefully acknowledge the support of National Science Foundation grants 1637473 and 1637474.

\end{document}